\documentclass[12pt,leqno]{article}
\usepackage{hyperref}
\usepackage{soul}
\usepackage[doc]{optional}
\usepackage{xcolor}
\definecolor{labelkey}{rgb}{0,0.08,0.45}
\definecolor{rekey}{rgb}{0,0.6,0.0}
\definecolor{Brown}{rgb}{0.45,0.0,0.05}
\usepackage{exscale,relsize}
\usepackage{amsmath}
\usepackage{amsfonts}
\usepackage{amssymb}
\usepackage{calc}
\usepackage{theorem}
\usepackage{pifont}      %needed by dingautolist
\usepackage{graphicx}
\oddsidemargin -0.1cm
\textwidth  16.5cm
\topmargin  0.0cm
\headheight 0.0cm
\textheight 21.0cm
\parindent  4mm
\parskip    10pt
\tolerance  3000
\DeclareMathOperator{\weakstarly}{\rightharpoondown_{\mathrm{w*}}}

\newcommand{\wk}{\ensuremath{\operatorname{w*}}}

\newcommand{\scal}[2]{\langle{{#1},{#2}}\rangle}

\newcommand{\RR}{\ensuremath{\mathbb R}}

\newcommand{\RX}{\ensuremath{\,\left]-\infty,+\infty\right]}}

\newcommand{\NN}{\ensuremath{\mathbb N}}

\newcommand{\menge}[2]{\big\{{#1} \mid {#2}\big\}}

\newcommand{\To}{\ensuremath{\rightrightarrows}}

\newcommand{\dom}{\ensuremath{\operatorname{dom}}}

\newcommand{\gra}{\ensuremath{\operatorname{gra}}}

\newcommand{\inte}{\ensuremath{\operatorname{int}}}

\newcommand{\bd}{\ensuremath{\operatorname{bdry}}}
\newcommand{\ran}{\ensuremath{\operatorname{ran}}}
\newcommand{\rec}{\ensuremath{\operatorname{rec}}}

\newcommand{\conv}{\ensuremath{\operatorname{conv}}}

\renewcommand{\phi}{\ensuremath{\varphi}}

\newtheorem{theorem}{Theorem}[section]
\newtheorem{lemma}[theorem]{Lemma}
\newtheorem{fact}[theorem]{Fact}
\newtheorem{corollary}[theorem]{Corollary}
\newtheorem{proposition}[theorem]{Proposition}

\theoremstyle{plain}{\theorembodyfont{\rmfamily}
}
\theoremstyle{plain}{\theorembodyfont{\rmfamily}
}
\theoremstyle{plain}{\theorembodyfont{\rmfamily}
}
\theoremstyle{plain}{\theorembodyfont{\rmfamily}
\newtheorem{example}[theorem]{Example}}
\theoremstyle{plain}{\theorembodyfont{\rmfamily}
\newtheorem{remark}[theorem]{Remark}}

\theoremstyle{plain}{\theorembodyfont{\rmfamily}
}

%\def\endproof{\vbox{\hrule height0.6pt\hbox{\vrule height1.3ex%
%width0.6pt\hskip0.8ex\vrule width0.6pt}\hrule height0.6pt}}
%\numberwithin{equation}{section}

\newcommand{\qede}{\hspace*{\fill}$\Diamond$\medskip}
%-------------------------------------------------------------------------
\begin{document}

%\sffamily

\title{\sffamily{Structure theory for maximally monotone operators with points of continuity}}

\author{
Jonathan M. Borwein\thanks{CARMA, University of Newcastle,
 Newcastle, New South Wales 2308, Australia. E-mail:
\texttt{jonathan.borwein@newcastle.edu.au}. Distinguished Professor
King Abdulaziz University, Jeddah.}\;
  and Liangjin\
Yao\thanks{CARMA, University of Newcastle,
 Newcastle, New South Wales 2308, Australia.
E-mail:  \texttt{liangjin.yao@newcastle.edu.au}.}}

\date{March 6,  2012}
\maketitle

\begin{abstract} \noindent
In this paper,
we consider the structure of maximally monotone operators in Banach space
 whose domains have nonempty interior and we
present new and explicit structure formulas for such operators. Along the way, we provide
 new proofs of the  norm-to-weak$^{*}$  closedness and of property (Q) for these operators (as recently proven by Voisei). Various applications and limiting examples are given.
\end{abstract}

\noindent {\bfseries 2010 Mathematics Subject Classification:}\\
{Primary  47H05;
Secondary 47B65, 47N10, 90C25}

\noindent {\bfseries Keywords:}
Local boundedness,
maximally monotone operator,
monotone operator,
normal cone operator,
norm-weak$^{*}$  graph closedness,
property (Q),
set-valued operator,
subdifferential operator.

\section{Introduction}

We assume throughout that
$X$ is a real Banach space with norm $\|\cdot\|$,
that $X^*$ is the continuous dual of $X$,
 and
that $X$ and $X^*$ are paired by $\scal{\cdot}{\cdot}$.
The \emph{closed unit ball} in $X$ is denoted by $B_X:=
\menge{x\in X}{\|x\|\leq1}$, $B_\delta(x):=x+\delta B_X$ (where $\delta>0$ and $x\in X$) and $\NN=\{1,2,3,\ldots\}$.

Let $A\colon X\To X^*$
be a \emph{set-valued operator} (also known as a relation, point-to-set mapping or multifunction)
from $X$ to $X^*$, i.e., for every $x\in X$, $Ax\subseteq X^*$,
and let
$\gra A := \menge{(x,x^*)\in X\times X^*}{x^*\in Ax}$ be
the \emph{graph} of $A$. The \emph{domain} of $A$
  is $\dom A:= \menge{x\in X}{Ax\neq\varnothing}$ and
$\ran A:=A(X)$ is the \emph{range} of $A$.

Recall that $A$ is
\emph{monotone} if
\begin{equation}
\scal{x-y}{x^*-y^*}\geq 0,\quad \forall (x,x^*)\in \gra A\;
\forall (y,y^*)\in\gra A,
\end{equation}
and \emph{maximally monotone} if $A$ is monotone and $A$ has
 no proper monotone extension
(in the sense of graph inclusion).
Let $A:X\rightrightarrows X^*$ be monotone and $(x,x^*)\in X\times X^*$.
 We say $(x,x^*)$ is \emph{monotonically related to}
$\gra A$ if
\begin{align*}
\langle x-y,x^*-y^*\rangle\geq0,\quad \forall (y,y^*)\in\gra
A.\end{align*}
Monotone operators have frequently proven to be a key class of objects in both
modern Optimization and Analysis; see, e.g., \cite{Bor1,Bor2,Bor3},
the books \cite{BC2011,
BorVan,BurIus,ph,Si,Si2,Rock70CA,RockWets,Zalinescu,Zeidler2A, Zeidler2B}
and the references given therein.

As much as  possible we adopt standard convex analysis notation.
Given a subset $C$ of $X$,
$\inte C$ is the \emph{interior} of $C$ and
$\overline{C}$ is   the
\emph{norm closure} of $C$.
 For the set $D\subseteq X^*$, $\overline{D}^{\wk}$
is the weak$^{*}$ closure of $D$, and  the norm $\times$ weak$^*$ closure of $C\times D$ is
$\overline{C\times D}^{\|\cdot\|\times\wk}$.
The \emph{indicator function} of $C$, written as $\iota_C$, is defined
at $x\in X$ by
\begin{align}
\iota_C (x):=\begin{cases}0,\,&\text{if $x\in C$;}\\
+\infty,\,&\text{otherwise}.\end{cases}\end{align} For every $x\in
X$, the \emph{normal cone} operator of $C$ at $x$ is defined by
$N_C(x):= \menge{x^*\in X^*}{\sup_{c\in C}\scal{c-x}{x^*}\leq 0}$, if
$x\in C$; and $N_C(x):=\varnothing$, if $x\notin C$;
 the \emph{tangent cone} operator of $C$ at $x$ is defined by
$T_C(x):= \menge{x\in X}{\sup_{x^*\in N_C(x)}\langle x, x^*\rangle\leq 0}$, if
$x\in C$; and $T_C(x):=\varnothing$, if $x\notin C$. The \emph{hypertangent cone} of $C$ at $x$, $H_C(x)$,
coincides with the interior of $T_C(x)$ (see \cite{BorStr,BorStr1}).

Let $f\colon X\to \RX$. Then
$\dom f:= f^{-1}(\RR)$ is the \emph{domain} of $f$.
We say $f$ is proper if $\dom f\neq\varnothing$.
Let $f$ be proper. The \emph{subdifferential} of
$f$ is defined by
   $$\partial f\colon X\To X^*\colon
   x\mapsto \{x^*\in X^*\mid(\forall y\in
X)\; \scal{y-x}{x^*} + f(x)\leq f(y)\}.$$

We say  a net $(a_{\alpha})_{\alpha\in\Gamma}$ in X is \emph{eventually bounded}
 if there exist $\alpha_0\in\Gamma$ and $M\geq0$ such that
\begin{align*}
\|a_{\alpha}\|\leq M,\quad \forall \alpha\succeq_\Gamma\alpha_0.
\end{align*}
We denote by $\longrightarrow$
and $\weakstarly$ respectively,
the norm convergence and weak$^*$ convergence
of  nets.

Let $A:X\To X^*$ be  monotone with $\dom A\neq\varnothing$ and consider a set $S\subseteq\dom A$.
 We define $A_S:X\To X^*$
by
\begin{align}
\gra A_{S}&=\overline{\gra A\cap(S\times X^*)}^{\|\cdot\|\times\wk}\nonumber\\
&=\big\{(x,x^*)\mid \exists\, \text{a net}\,
(x_{\alpha},x^*_{\alpha})_{\alpha\in\Gamma}\,
\text{in}\,\gra A\cap (S\times X^*)\, \text{such that}\, x_{\alpha}\longrightarrow x,
x^*_{\alpha}\weakstarly x^*\big\}\label{Deintcl}.
\end{align}
If $\inte\dom A\neq\varnothing$, we denote by
$A_{\inte}:=A_{\inte\dom A}$. We  note that $A_{\dom A} =A$ while $\gra A_S \subseteq \gra A_T$ for  $S \subseteq T$.

Let $A:X\rightrightarrows X^*$.
Following \cite{Hou}, we say $A$ has the upper-semicontinuity property \emph{property (Q)}  if
for every net $(x_{\alpha})_{\alpha\in J}$ in
$X$ such that $x_{\alpha}\longrightarrow x$, we have
\begin{align}\label{propQ}
\bigcap_{\alpha\in J}\overline{\conv\left[
\bigcup_{\beta\succeq_J\alpha} A (x_{\beta})\right]}^{\wk}
\subseteq Ax.
\end{align}

The remainder of this paper is organized as follows.
In Section~\ref{s:aux}, we collect preliminary results for future reference
and  the
reader's convenience. In Section~\ref{s:voi}, we study local boundedness properties
of monotone operators and
 also give a somewhat simpler proof of a recent result of Voisei \cite{VoiseiIn}.
The main result (Theorem~\ref{TheFom:2}) is proved
in Section~\ref{s:main}, and we also present a new proof
of a result of Auslender (Theorem~\ref{Ausmain:1}).
 A second structure theorem \ref{PropLe:2} --- which yields a strong version of property (Q)  for maximally monotone operators (Theorem~\ref{PropLe:2} ---) is also provided.
 Finally,  in Section~\ref{Appc:1} we present a few extra examples.

\section{Preliminary results}\label{s:aux}

We start with a classic compactness theorem.

\begin{fact}[Banach--Alaoglu]\label{BaAl}\emph{(See
  \cite[Theorem~2.6.18]{Megg} or \cite[Theorem~3.15]{Rudin}.)}
 The closed unit ball $B_{X^*}$ in $X^*$  is weak$^*$ compact.
\end{fact}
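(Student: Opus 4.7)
The plan is to derive Banach--Alaoglu from Tychonoff's theorem on the compactness of arbitrary products of compact spaces. The idea is to realize $B_{X^*}$, equipped with the weak$^*$ topology, as a closed subspace of a compact product space, so that compactness descends to it via a topological embedding.

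First I would construct the ambient product. For each $x \in X$ set $D_x := [-\|x\|,\|x\|]\subseteq \RR$; each $D_x$ is compact, so by Tychonoff the product $P := \prod_{x \in X} D_x$ equipped with the product topology is compact. Define the evaluation map
\begin{equation*}
\Phi : B_{X^*} \longrightarrow P, \qquad \Phi(x^*) := \big(\scal{x}{x^*}\big)_{x \in X}.
\end{equation*}
The estimate $|\scal{x}{x^*}|\leq \|x\|$ valid whenever $\|x^*\|\leq 1$ ensures that $\Phi$ is well defined, and $\Phi$ is plainly injective since an element of $X^*$ is determined by its values. Since the weak$^*$ topology on $B_{X^*}$ is by definition the topology of pointwise evaluation on $X$, it coincides with the topology pulled back from the product topology on $P$; consequently $\Phi$ is a homeomorphism onto its image.

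The one substantive step is to verify that $\Phi(B_{X^*})$ is closed in $P$. Suppose a net $(\Phi(x^*_\alpha))_\alpha$ in $\Phi(B_{X^*})$ converges in $P$ to some tuple $f = (f(x))_{x \in X}$; by definition of the product topology, $\scal{x}{x^*_\alpha}\to f(x)$ for every $x \in X$. Since linearity and the norm bound $|f(x)|\leq\|x\|$ are both preserved under pointwise limits, $f$ is a linear functional on $X$ with $\|f\|\leq 1$, i.e., $f\in B_{X^*}$ and the limit tuple equals $\Phi(f)$. Hence $\Phi(B_{X^*})$ is closed in the compact space $P$, and transporting compactness back through the homeomorphism $\Phi$ yields weak$^*$ compactness of $B_{X^*}$. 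The only conceptual hurdle is this closedness verification, which reduces to routine pointwise passage to the limit in the defining equations for linearity and the norm constraint; everything else is formal, and no completeness or reflexivity hypothesis on $X$ is required.
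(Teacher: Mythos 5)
Your proof is correct and is the standard Tychonoff-product argument; the paper itself offers no proof, citing this as a known fact to Megginson and Rudin, whose proofs follow exactly the embedding-into-$\prod_{x\in X}[-\|x\|,\|x\|]$ route you describe. The closedness verification you single out is indeed the only substantive step, and your treatment of it (pointwise preservation of linearity and of the bound $|f(x)|\leq\|x\|$) is sound.
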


\begin{fact}[Rockafellar]\label{SubMR}\emph{(See \cite[Theorem~A]{Rock702},
 \cite[Theorem~3.2.8]{Zalinescu}, \cite[Theorem~18.7]{Si2} or \cite[Theorem~9.2.1]{BorVan}.)}
 Let $f:X\rightarrow\RX$ be a proper lower semicontinuous convex function.
Then $\partial f$ is maximally monotone.
\end{fact}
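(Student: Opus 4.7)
The plan splits into monotonicity (immediate) and maximality (the substance). For monotonicity I would write the defining subgradient inequality at each of $(x,x^*),(y,y^*)\in\gra \partial f$ with the other point as the test vector, then add the two: the $f$-values cancel and what survives is $\langle x-y,x^*-y^*\rangle\geq 0$.

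For maximality, fix $(y,y^*)$ monotonically related to $\gra \partial f$. Replacing $f$ by $x\mapsto f(x+y)-\langle x,y^*\rangle$ (still proper, lsc, convex), I may assume $y=0$ and $y^*=0$, so the hypothesis reads $\langle z,z^*\rangle\geq 0$ for every $(z,z^*)\in \gra \partial f$ and the goal becomes $f(x)\geq f(0)$ for every $x\in X$. My strategy is to build a sequence $(\tilde x_n,z_n^*)\in \gra \partial f$ with $\tilde x_n\to 0$ and $z_n^*\to 0$ in norm, then pass to the limit in $f(x)\geq f(\tilde x_n)+\langle x-\tilde x_n,z_n^*\rangle$ using lower semicontinuity of $f$.

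To construct the sequence I would introduce $g:=f+\tfrac{1}{2}\|\cdot\|^2$, which is proper, lsc, convex, and bounded below (since $f$ majorizes a continuous affine function by Hahn--Banach). Ekeland's variational principle, applied with parameters $\epsilon_n=1/n^2$ and $\lambda_n=1/n$, then produces $\tilde x_n$ with $g(\tilde x_n)\leq\inf g+1/n^2$ strictly minimizing $g(\cdot)+(1/n)\|\cdot-\tilde x_n\|$. Fermat's rule together with the subdifferential sum rule (valid because $\tfrac{1}{2}\|\cdot\|^2$ and $\|\cdot-\tilde x_n\|$ are everywhere continuous) then delivers $z_n^*\in\partial f(\tilde x_n)$, an element $j_n$ of the duality map $J(\tilde x_n)=\partial(\tfrac{1}{2}\|\cdot\|^2)(\tilde x_n)$, and $b_n\in B_{X^*}$ satisfying $z_n^*+j_n+(1/n)b_n=0$.

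The main obstacle I anticipate is harvesting strong convergence from the monotone relation. Inserting the identity $\langle\tilde x_n,j_n\rangle=\|\tilde x_n\|^2$ into $0\leq\langle\tilde x_n,z_n^*\rangle=-\|\tilde x_n\|^2-(1/n)\langle\tilde x_n,b_n\rangle$ yields $\|\tilde x_n\|^2\leq(1/n)\|\tilde x_n\|$, hence $\|\tilde x_n\|\leq 1/n\to 0$ and consequently $\|z_n^*\|\leq\|\tilde x_n\|+1/n\to 0$. Lower semicontinuity of $f$ would then upgrade $f(x)\geq f(\tilde x_n)+\langle x-\tilde x_n,z_n^*\rangle$ to $f(x)\geq\liminf_n f(\tilde x_n)\geq f(0)$, which is precisely $0\in\partial f(0)$. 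The attraction of this route is that it needs only Ekeland's principle and the continuous-convex subdifferential sum rule; neither reflexivity of $X$ nor smoothness of the norm --- nor even the Br\o{}ndsted--Rockafellar theorem --- is required.
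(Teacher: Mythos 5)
Your argument is correct and complete. Note first that the paper offers no proof of this statement at all: it is recorded as a Fact with four citations (Rockafellar's original argument via the Br\o ndsted--Rockafellar theorem, and the treatments in Z\u{a}linescu, Simons, and Borwein--Vanderwerff), so there is no internal proof to compare against. Your route --- regularize with $\tfrac12\|\cdot\|^2$, apply Ekeland's variational principle to $g=f+\tfrac12\|\cdot\|^2$ with $\varepsilon_n=1/n^2$, $\lambda_n=1/n$, extract $z_n^*+j_n+\tfrac1n b_n=0$ from Fermat's rule and the Moreau--Rockafellar sum rule (legitimate here since both norm terms are everywhere continuous), and then use the monotone-relatedness inequality $\langle\tilde x_n,z_n^*\rangle\ge 0$ together with $\langle\tilde x_n,j_n\rangle=\|\tilde x_n\|^2=\|j_n\|^2$ to force $\|\tilde x_n\|\le 1/n$ and $\|z_n^*\|\le\|\tilde x_n\|+1/n$ --- is the standard modern proof and all steps check: the normalization to $(y,y^*)=(0,0)$ is a harmless tilt, $g$ is bounded below because a proper lsc convex function admits a continuous affine minorant, and the final passage $f(x)\ge\liminf_n f(\tilde x_n)\ge f(0)$ uses only lower semicontinuity and $\|x-\tilde x_n\|\,\|z_n^*\|\to 0$. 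Two points deserve a half-sentence each in a polished write-up: completeness of $X$ is what licenses Ekeland (the paper itself remarks the result fails in incomplete normed spaces, so this hypothesis is genuinely used), and the conclusion $f(x)\ge f(0)$ for all $x$ yields $0\in\partial f(0)$ only after observing that $f(0)<+\infty$, which follows since otherwise $f\equiv+\infty$ would contradict properness. Your approach buys a self-contained proof from Ekeland's principle alone, bypassing Br\o ndsted--Rockafellar, renorming tricks, and any reflexivity or smoothness assumptions, which is exactly the level of generality the Fact asserts.
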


The prior result can fail in both incomplete normed spaces and in complete metrizable locally convex spaces \cite{BorVan}.
The next two important central results now has many proofs (see also \cite[Ch. 8]{BorVan}).

\begin{fact}[Rockafellar]\emph{(See \cite[Theorem~1]{Rock69} or  \cite[Theorem~2.28]{ph}.)}
\label{pheps:11}Let $A:X\To X^*$ be  monotone with
 $\inte\dom A\neq\varnothing$.
Then $A$ is locally bounded at $x\in\inte\dom A$,
 i.e., there exist $\delta>0$ and $K>0$ such that
\begin{align*}\sup_{y^*\in Ay}\|y^*\|\leq K,
\quad \forall y\in (x+\delta B_X)\cap \dom A.
\end{align*}
\end{fact}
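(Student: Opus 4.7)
My plan is to follow a Rockafellar-style convex-analytic argument. After a translation I may assume $x=0\in\inte\dom A$, and I pick $\delta>0$ with $2\delta B_X\subseteq\dom A$. The key construction is the auxiliary function
\[ f(u):=\sup\bigl\{\langle u-y,y^*\rangle : (y,y^*)\in\gra A,\ \|y\|\leq 2\delta\bigr\}, \]
which, being a pointwise supremum of affine continuous functions of $u$, is convex and lower semicontinuous on $X$. The goal is to show that $f$ is continuous at $0$ and then extract a norm bound on $y^*$ from the resulting estimate $f(u)\leq M$ on a small ball.

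The first substantive step is to verify that $f$ is finite on $2\delta B_X$. For any $u\in 2\delta B_X\subseteq\dom A$, pick $u^*\in Au$; then for every admissible $(y,y^*)$ in the supremum, monotonicity gives $\langle u-y,u^*-y^*\rangle\geq 0$, hence $\langle u-y,y^*\rangle\leq\langle u-y,u^*\rangle\leq 4\delta\|u^*\|$, so $f(u)\leq 4\delta\|u^*\|<\infty$. Thus $\dom f$ contains an open neighborhood of $0$, and the standard Baire-category argument for convex lsc functions on a Banach space (the sublevel sets $\{f\leq n\}$ are closed and cover the neighborhood, so one has nonempty interior, and convexity then forces continuity at every interior point of $\dom f$) produces $\epsilon\in(0,\delta)$ and $M>0$ with $f(u)\leq M$ for all $u\in\epsilon B_X$.

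It remains to turn this bound on $f$ into a norm bound on $y^*$. For any $(y,y^*)\in\gra A$ with $\|y\|\leq\epsilon/2$ and any $u\in\epsilon B_X$, the definition of $f$ gives $\langle u-y,y^*\rangle\leq M$. Taking the supremum over $u\in\epsilon B_X$ yields $\epsilon\|y^*\|-\langle y,y^*\rangle\leq M$, and combining with $\langle y,y^*\rangle\leq(\epsilon/2)\|y^*\|$ gives $\|y^*\|\leq 2M/\epsilon$. Setting $K:=2M/\epsilon$ on the neighborhood $(\epsilon/2)B_X$ of $0$ yields the desired local boundedness.

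The main obstacle I anticipate is simply choosing the right auxiliary function; once $f$ is set up as above, the only nontrivial input is the hypothesis $2\delta B_X\subseteq\dom A$, which is precisely what lets monotonicity be tested against some $u^*\in Au$ for every $u$ in a full neighborhood, forcing $f(u)<\infty$ there. The passage from continuity of $f$ to norm control of $y^*$ is then a straightforward dual-norm computation, and no use of completeness beyond the Baire category step is required.
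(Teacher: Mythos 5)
The paper does not prove this statement: it is quoted as a known Fact, with the proof deferred to the cited references (Rockafellar's 1969 paper and Phelps, Theorem~2.28). Your argument is correct and is essentially the classical proof appearing in those sources: one forms the convex lower semicontinuous function $f(u)=\sup\{\langle u-y,y^*\rangle:(y,y^*)\in\gra A,\ \|y\|\le 2\delta\}$, checks via monotonicity that $f$ is finite on a neighborhood of $x$, invokes Baire category (this is where completeness of $X$ enters) to get $f$ bounded above near $x$, and converts that bound into a uniform dual-norm bound on $Ay$ for $y$ near $x$; all of your individual estimates check out.
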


\begin{fact}[Rockafellar]
\emph{(See \cite[Theorem~1]{Rock69} or \cite[Theorem~27.1 and Theorem~27.3]{Si2}.)}
\label{f:referee02c}
Let $A:X\To X^*$ be  maximal monotone with $\inte\dom A\neq\varnothing$. Then
$\inte\dom A=\inte\overline{\dom A}$ and $\overline{\dom A}$ is convex.
\end{fact}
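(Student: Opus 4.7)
The plan is to reduce both claims to the following \textbf{key lemma}: for every $y_0\in\inte\dom A$ and every $x\in\overline{\dom A}$, the half-open segment $[y_0,x)$ is contained in $\inte\dom A$.

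Granting this lemma, both conclusions follow by short continuity arguments. For convexity of $\overline{\dom A}$: given $x_1,x_2\in\overline{\dom A}$, $\lambda\in(0,1)$, and any $y_0\in\inte\dom A$, the lemma applied to $(y_0,x_1)$ gives $y_\epsilon:=(1-\epsilon)x_1+\epsilon y_0\in\inte\dom A$ for every $\epsilon\in(0,1]$; applying it again to $(y_\epsilon,x_2)$ yields $(1-\lambda)y_\epsilon+\lambda x_2\in\inte\dom A\subseteq\overline{\dom A}$, and letting $\epsilon\downarrow 0$ places $(1-\lambda)x_1+\lambda x_2\in\overline{\dom A}$. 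For the equality $\inte\dom A=\inte\overline{\dom A}$, only the inclusion $\inte\overline{\dom A}\subseteq\inte\dom A$ requires work: if $x\in\inte\overline{\dom A}$ and $y_0\in\inte\dom A$, the ray from $y_0$ through $x$ can be extended past $x$ while staying in $\overline{\dom A}$, producing $x'\in\overline{\dom A}$ with $x$ a proper interior point of $[y_0,x']$, and the lemma then gives $x\in\inte\dom A$.

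For the key lemma itself, fix $z=(1-\lambda)y_0+\lambda x$ with $\lambda\in[0,1)$. By maximality, to show $z\in\dom A$ it suffices to produce $z^*\in X^*$ monotonically related to $\gra A$. By Fact~\ref{pheps:11}, choose $\delta>0$ and $K_0>0$ such that $\|y^*\|\le K_0$ for every $y^*\in Ay$ with $y\in B_\delta(y_0)\cap\dom A$; the candidate $z^*$ is sought in a sufficiently large multiple of $B_{X^*}$, chosen so that a geometric estimate using the convex combination $z=(1-\lambda)y_0+\lambda x$ and monotonicity against the selections $y_0^*\in Ay_0$ forces any valid $z^*$ to be norm-bounded. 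For each finite subfamily $\{(a_i,a_i^*)\}_{i=1}^N\subseteq\gra A$, one exhibits a solution $z^*$ to the finite system $\langle z-a_i,\,z^*-a_i^*\rangle\ge 0$ within this ball by a Debrunner--Flor style extension of the finite monotone set along segments anchored at $y_0$. Weak$^*$ compactness (Fact~\ref{BaAl}) and the finite intersection property then furnish a common $z^*$ satisfying the monotone relation against \emph{all} of $\gra A$, yielding $z\in\dom A$. To upgrade $z\in\dom A$ to $z\in\inte\dom A$, I would observe that the construction works uniformly for all nearby points obtained by perturbing $z$ within a small ball (using that $y_0$ stays in $\inte\dom A$ under small perturbations), producing a neighborhood of $z$ inside $\dom A$.

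The main obstacle is the finite-intersection step: converting finitely many monotonicity inequalities, together with the convex-combination form of $z$ and the uniform bound on $Ay_0$, into the existence of a norm-bounded $z^*$ satisfying all of them simultaneously. The nonempty-interior hypothesis is crucial here, furnishing both the bounded selection from $Ay_0$ and the geometric "room" that lets $z$ be written as a strict convex combination with a true interior point; without it the finite-dimensional extension step collapses.
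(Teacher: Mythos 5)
First, a point of comparison: the paper offers no proof of this statement at all — it is quoted as a known Fact with citations to Rockafellar \cite{Rock69} and Simons \cite{Si2}, whose arguments run, respectively, through local boundedness at absorbing points of $\conv \dom A$ and through the Fitzpatrick function. So your Debrunner--Flor route is necessarily different from anything in the paper, and the only question is whether it is complete. Your reduction of both conclusions to the segment lemma ($y_0\in\inte\dom A$ and $x\in\overline{\dom A}$ imply $[y_0,x)\subseteq\inte\dom A$) is correct, as is the observation that by maximality it suffices to exhibit $z^*$ monotonically related to $\gra A$, and the final upgrade from $z\in\dom A$ to $z\in\inte\dom A$ by letting $y_0$ range over a ball inside $\inte\dom A$.

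The gap is exactly where you flag it, and the mechanism you propose cannot close it. Monotonicity of a candidate $z^*$ against the graph of $A$ over $B_\delta(y_0)$ yields only the one-sided estimate $\delta\|z^*\|\le\langle z-y_0,z^*\rangle+(\|z-y_0\|+\delta)M$ (this is Fact~\ref{extlem} of the paper). When $\|z-y_0\|>\delta$ this is satisfied by $z^*=tw^*$ for every $t\ge 0$, where $\|w^*\|=1$ norms $z-y_0$; so the feasible set determined by the $y_0$-side constraints contains an unbounded ray, no multiple of $B_{X^*}$ is ``sufficiently large'', and the finite-intersection argument has no fixed weak$^*$ compact set to run in. (Likewise, a Debrunner--Flor extension of a finite subfamily only places $z^*$ in $\conv\{a_1^*,\dots,a_N^*\}$, whose norm depends on the subfamily.) What is missing is an opposing constraint from a graph point at or beyond $x$: if $x\in\dom A$ and $x^*\in Ax$, then $\langle z-x,z^*-x^*\rangle\ge 0$ together with $z-y_0=-\tfrac{\lambda}{1-\lambda}(z-x)$ gives $\langle z-y_0,z^*\rangle\le\lambda\langle x-y_0,x^*\rangle$, and combining this with the first inequality bounds $\|z^*\|$ by a constant depending only on $\delta$, $M$, $\|x-y_0\|$ and $\|x^*\|$ — this two-sided squeeze is precisely the computation in the paper's Lemma~\ref{LemClC:1}. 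Consequently you must first prove the segment lemma for $x\in\dom A$, including $(x,x^*)$ in every finite subsystem, and only afterwards pass to $x\in\overline{\dom A}$ by a separate limiting argument (approximate $x$ by $x_n\in\dom A$, note $(1-\lambda)(y_0+\delta B_X)+\lambda x_n\subseteq\dom A$, conclude $z\in\inte\overline{\dom A}$, and return to $\dom A$ via local boundedness and Fact~\ref{normbn}). As written, the proposal asserts the decisive norm bound rather than proving it, and the ingredients it names do not suffice to establish it.
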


The final two results we give are elementary.

\begin{fact}\emph{(\cite[Section~2, page~539]{BFG}.)}\label{normbn}
Let $A:X\rightrightarrows X^*$ be maximally monotone and a net
 $(a_{\alpha}, a^*_{\alpha})_{\alpha\in\Gamma}$ in $\gra A$.
Assume that $(a_{\alpha}, a^*_{\alpha})_{\alpha\in\Gamma}$
norm $\times$ weak$^*$ converges to $(x,x^*)$
  and $(a^*_{\alpha})_{\alpha\in\Gamma}$ is eventually bounded.
Then $(x,x^*)\in\gra A$.
\end{fact}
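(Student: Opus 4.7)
The plan is to invoke maximality: to conclude $(x,x^*)\in\gra A$ it suffices to show that $(x,x^*)$ is monotonically related to $\gra A$. So fix an arbitrary $(y,y^*)\in\gra A$ and try to prove
\[
\langle x-y,x^*-y^*\rangle\geq 0
\]
by passing to the limit in the monotonicity inequality
\[
\langle a_{\alpha}-y,a^*_{\alpha}-y^*\rangle\geq 0,\qquad \forall\alpha\in\Gamma,
\]
which holds because $(a_{\alpha},a^*_{\alpha})$ and $(y,y^*)$ both lie in $\gra A$.

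Expanding the bilinear form gives four terms:
\[
\langle a_{\alpha},a^*_{\alpha}\rangle-\langle a_{\alpha},y^*\rangle-\langle y,a^*_{\alpha}\rangle+\langle y,y^*\rangle \geq 0.
\]
The three terms $\langle a_{\alpha},y^*\rangle$, $\langle y,a^*_{\alpha}\rangle$ and $\langle y,y^*\rangle$ are routine: the first converges to $\langle x,y^*\rangle$ by the norm convergence $a_{\alpha}\longrightarrow x$, the second to $\langle y,x^*\rangle$ by the weak$^*$ convergence $a^*_{\alpha}\weakstarly x^*$, and the third is constant.

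The one term requiring real care is the bilinear pairing $\langle a_{\alpha},a^*_{\alpha}\rangle$, since we must pair a norm-convergent net with a merely weak$^*$-convergent one. This is exactly where the eventual boundedness hypothesis enters. Taking $M\geq 0$ and $\alpha_0\in\Gamma$ with $\|a^*_{\alpha}\|\leq M$ for all $\alpha\succeq_{\Gamma}\alpha_0$, I would split
\[
\langle a_{\alpha},a^*_{\alpha}\rangle-\langle x,x^*\rangle
=\langle a_{\alpha}-x,a^*_{\alpha}\rangle+\langle x,a^*_{\alpha}-x^*\rangle,
\]
bound the first summand by $\|a_{\alpha}-x\|\cdot M\to 0$ using norm convergence, and send the second to $0$ by the weak$^*$ convergence of $a^*_{\alpha}$. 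Thus $\langle a_{\alpha},a^*_{\alpha}\rangle\to\langle x,x^*\rangle$.

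Combining the four limits in the monotonicity inequality yields $\langle x-y,x^*-y^*\rangle\geq 0$ for every $(y,y^*)\in\gra A$, and maximality of $A$ then forces $(x,x^*)\in\gra A$. The only substantive obstacle is handling the pairing $\langle a_{\alpha},a^*_{\alpha}\rangle$, and that is precisely what the eventual-boundedness assumption is designed to dispatch; without it one cannot pair norm with weak$^*$ convergence, and indeed the statement would fail.
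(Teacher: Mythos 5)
Your argument is correct: reduce to showing $(x,x^*)$ is monotonically related to $\gra A$, pass to the limit in $\langle a_{\alpha}-y,a^*_{\alpha}-y^*\rangle\geq 0$, and use the eventual bound $\|a^*_{\alpha}\|\leq M$ to control the mixed pairing via $\langle a_{\alpha}-x,a^*_{\alpha}\rangle+\langle x,a^*_{\alpha}-x^*\rangle$. The paper does not prove this Fact (it only cites \cite{BFG}), but your proof is exactly the standard argument underlying that reference, and every step checks out.
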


\begin{fact}
\emph{(See \cite[Proposition~4.1.7]{AE}.)}
\label{FTaC:1}
Let $C$ be a convex subset of $C$ with $\inte C\neq\varnothing$.
Then for every $x\in C$, $\inte T_C(x)=\bigcup_{\lambda>0}\lambda\left[\inte C-x\right]$.
\end{fact}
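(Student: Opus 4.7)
The plan is to introduce the auxiliary convex set $D := \bigcup_{\lambda > 0}\lambda(C - x)$ together with the open set $F := \bigcup_{\lambda > 0}\lambda(\inte C - x)$, and then establish the chain $F \subseteq \inte T_C(x) = \inte \overline{D} = \inte D \subseteq F$. First I would verify that $D$ is convex and that $T_C(x) = \overline{D}$. Convexity of $D$ is a direct computation: any convex combination $tu_1 + (1-t)u_2$ with $u_i = \lambda_i(c_i - x) \in D$ rewrites as $\mu(c - x)$, where $\mu = t\lambda_1 + (1-t)\lambda_2 > 0$ and $c$ is the corresponding convex combination of $c_1, c_2 \in C$. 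The identity $T_C(x) = \overline{D}$ is a bipolar argument: the defining inequality for $N_C(x)$ realises it as the polar of $\cone(C-x)$, and $T_C(x)$ is in turn the polar of $N_C(x)$, so the bipolar theorem gives $T_C(x) = \overline{\cone(C - x)} = \overline{D}$.

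Next, for the easy inclusion $F \subseteq \inte T_C(x)$, each set $\lambda(\inte C - x)$ with $\lambda > 0$ is open (translation and positive scaling of an open set) and lies inside $D \subseteq T_C(x)$, so $F$ is an open subset of $T_C(x)$, hence $F \subseteq \inte T_C(x)$. In particular $D$ has nonempty interior, so the standard fact that $\inte \overline{D} = \inte D$ for a convex set with nonempty interior yields $\inte T_C(x) = \inte D$.

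The main work is the reverse inclusion $\inte D \subseteq F$. Fix $h \in \inte D$; I want to write $h = \lambda(c'' - x)$ for some $\lambda > 0$ and $c'' \in \inte C$. Choose any $c_0 \in \inte C$ and $\delta > 0$ with $B_\delta(h) \subseteq D$. For sufficiently small $\tau > 0$ the point $h - \tau(c_0 - x)$ lies in $B_\delta(h) \cap D$, so one can write $h - \tau(c_0 - x) = \lambda'(c' - x)$ for some $\lambda' > 0$ and $c' \in C$. Setting $\lambda := \lambda' + \tau$ and $c'' := (\lambda'/\lambda)\, c' + (\tau/\lambda)\, c_0$ gives $h = \lambda(c'' - x)$, and $c''$ is a strict convex combination of $c' \in C$ with the interior point $c_0$, hence $c'' \in \inte C$. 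Thus $h \in F$. The degenerate case $h = 0$ is covered by observing that $0 \in \inte T_C(x)$ forces the cone $T_C(x)$ to equal $X$, whence $N_C(x) = \{0\}$, which for a convex $C$ with nonempty interior is equivalent to $x \in \inte C$, so $0 = 1\cdot(x - x) \in F$.

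The principal obstacle is the perturbation step: one needs simultaneously that (a) $h - \tau(c_0 - x)$ remains inside $D$, which forces $\tau$ small relative to $\delta/(\|c_0 - x\| + 1)$, and (b) the resulting convex combination genuinely lies in $\inte C$, which needs strictly positive weight on $c_0$. Both are handled by the same choice of $\tau > 0$, and no machinery beyond elementary convex analysis and the bipolar theorem is required.
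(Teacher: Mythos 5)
The paper offers no proof of this statement at all --- it is quoted as a known fact with a citation to Aubin--Ekeland \cite[Proposition~4.1.7]{AE} --- so there is nothing internal to compare your argument against; what you have written is a self-contained replacement for that citation, and it is correct. Your chain $F\subseteq\inte T_C(x)=\inte\overline{D}=\inte D\subseteq F$ with $D=\bigcup_{\lambda>0}\lambda(C-x)$ and $F=\bigcup_{\lambda>0}\lambda(\inte C-x)$ is the standard route: the bipolar theorem identifies $T_C(x)$ (which the paper defines as the polar of $N_C(x)$, itself the polar of $C-x$) with $\overline{\cone(C-x)}=\overline{D}$; the identity $\inte\overline{D}=\inte D$ for a convex set with nonempty interior is legitimate here because $F$ is a nonempty open subset of $D$; and the perturbation step $h=\lambda'(c'-x)+\tau(c_0-x)=\lambda(c''-x)$ correctly invokes the line-segment principle (a convex combination of a point of $C$ with a point of $\inte C$, carrying strictly positive weight on the interior point, lies in $\inte C$). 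Your separate treatment of $h=0$ via the supporting-hyperplane characterization of $N_C(x)=\{0\}$ is sound, though in fact the same perturbation argument already covers $h=0$ since $0=\lambda(x-x)\in D$, so that case split is dispensable. The only cosmetic caveat is that the statement in the paper contains a typo (``convex subset of $C$'' should read ``convex subset of $X$''), which you have implicitly and correctly repaired.
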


\section{Local boundedness properties}\label{s:voi}

The following result is extracted from part of the proof of \cite[Proposition~3.1]{Yao3}.
For the reader's convenience, we repeat the proof here.

\begin{fact}[Boundedness below]\label{extlem}
Let $A:X\rightrightarrows X^*$ be monotone and $x\in\inte\dom A$. Then there exist $\delta>0$ and  $M>0$ such that
 $x+\delta B_X\subseteq\dom A$ and $\sup_{a\in x+\delta B_X}\|Aa\|\leq M$.
Assume that $(z,z^*)$ is monotonically related to $\gra A$. Then
\begin{align}
\langle z-x, z^*\rangle
\geq \delta\|z^*\|-(\|z-x\|+\delta) M.
\end{align}

\end{fact}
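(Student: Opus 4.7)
The plan is to first invoke Rockafellar's local boundedness result (Fact~\ref{pheps:11}) to produce the radius $\delta>0$ and bound $M>0$: shrinking $\delta$ if necessary, we may arrange both $x+\delta B_X\subseteq \dom A$ (using $x\in\inte\dom A$) and the uniform norm bound on $\sup_{a\in x+\delta B_X}\|Aa\|$ simultaneously. With these in hand, the existence part is immediate.

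For the inequality, I would exploit the freedom to probe $\gra A$ at arbitrary points $a:=x+\delta u$ with $u\in B_X$. Since $a\in x+\delta B_X\subseteq \dom A$, there exists $a^*\in Aa$ with $\|a^*\|\leq M$. Writing out the monotone relation between $(z,z^*)$ and $(a,a^*)$ gives
\begin{equation*}
\langle z-x-\delta u,\, z^*-a^*\rangle\geq 0,
\end{equation*}
which after rearrangement yields
\begin{equation*}
\langle z-x,z^*\rangle\;\geq\;\delta\langle u,z^*\rangle+\langle z-x,a^*\rangle-\delta\langle u,a^*\rangle.
\end{equation*}
The last two inner products are each bounded using Cauchy--Schwarz together with $\|a^*\|\leq M$, $\|u\|\leq 1$, giving a lower bound of the form
\begin{equation*}
\langle z-x,z^*\rangle\;\geq\;\delta\langle u,z^*\rangle-(\|z-x\|+\delta)M.
\end{equation*}

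The final step, and really the only substantive one, is to take the supremum over $u\in B_X$ on the right-hand side: by definition of the dual norm, $\sup_{u\in B_X}\langle u,z^*\rangle=\|z^*\|$, so passing to the supremum (or, equivalently, choosing $u$ with $\langle u,z^*\rangle\geq \|z^*\|-\varepsilon$ and letting $\varepsilon\downarrow 0$) delivers the desired estimate
\begin{equation*}
\langle z-x,z^*\rangle\;\geq\;\delta\|z^*\|-(\|z-x\|+\delta)M.
\end{equation*}
I do not anticipate any serious obstacle: the argument is a standard ``probe the boundary of the ball'' trick, and the only thing to be careful about is ensuring the same $\delta$ works both for containment in $\dom A$ and for the uniform norm bound, which is handled by taking the smaller of the two radii supplied by Fact~\ref{pheps:11}.
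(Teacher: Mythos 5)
Your proposal is correct and follows essentially the same route as the paper: obtain $\delta$ and $M$ from Rockafellar's local boundedness (Fact~\ref{pheps:11}), write the monotone inequality against points $x+b$ with $b\in\delta B_X$ (your $b=\delta u$), bound the error terms by $(\|z-x\|+\delta)M$, and take the supremum $\sup_{b\in\delta B_X}\langle b,z^*\rangle=\delta\|z^*\|$. The only cosmetic difference is your explicit remark about shrinking $\delta$ to get $x+\delta B_X\subseteq\dom A$, which the paper leaves implicit.
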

\begin{proof}
Since $x\in\inte\dom A$, using Fact~\ref{pheps:11},
there exist $\delta>0$ and $M>0$ such that
\begin{align}
Aa\neq\varnothing \quad\text{and}\quad\sup_{a^*\in Aa}\|a^*\|\leq M,\quad \forall a\in (x+\delta
B_X).\label{RVT:10a}
\end{align}
Then we have
\begin{align}
&\langle z-x-b, z^*-b^*\rangle\geq0,\quad \forall b\in \delta B_X, b^*\in A(x+b)\nonumber\\
&\Rightarrow  \langle z-x, z^*\rangle-\langle b, z^*\rangle
+\langle z-x-b, -b^*\rangle\geq0,\quad\forall b\in \delta B_X, b^*\in  A(x+a)\nonumber\\
&\Rightarrow   \langle z-x, z^*\rangle-
\langle b, z^*\rangle\geq\langle z-x-b, b^*\rangle,\quad\forall b\in
\delta B_X, b^*\in A(x+b)\nonumber\\
&\Rightarrow \langle z-x, z^*\rangle-
\langle b, z^*\rangle\geq -(\|z-x\|+\delta) M, \quad\forall b\in \delta
B_X\quad\text{(by \eqref{RVT:10a})}\nonumber\\
&\Rightarrow \langle z-x, z^*\rangle
\geq \langle b, z^*\rangle -(\|z-x\|+\delta) M, \quad\forall b\in \delta
B_X.
\end{align}
Hence we have
\begin{align*}
\langle z-x, z^*\rangle
\geq \delta\|z^*\|-(\|z-x\|+\delta) M.
\end{align*}
\end{proof}

Fact \ref{extlem} leads naturally to the following result which has many precursors.

\begin{lemma}[Strong directional boundedness]\label{LemClC:1}
Let $A:X\rightrightarrows X^*$ be monotone and $x\in\inte\dom A$.
 Then there exist $\delta>0$ and  $M>0$ such that
 $x+2\delta B_X\subseteq\dom A$ and $\sup_{a\in x+2\delta B_X}\|Aa\|\leq M$.
Assume also that
$(x_0,x_0^*)$ is monotonically related to $\gra A$.
Then
 \begin{align*}\sup_{a\in\left[ x+\delta B_X,\,x_0\right[,\,
  a^*\in Aa}\|a^*\|\leq\frac{1}{\delta}\left(\|x_0-x\| +1\right)
\left(\|x^*_0\|+2M\right),
\end{align*}
where  $\left[x+\delta B_X,\,x_0\right[:=\big\{(1-t)y+tx_0\mid 0\leq t< 1, y\in x+\delta B_X\big\}$.
\end{lemma}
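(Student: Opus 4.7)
The plan is to apply Fact~\ref{extlem} around a carefully chosen centre $y\in x+\delta B_X$ so that the segment from $y$ to $x_0$ passes through $a$, and then to exploit monotonicity between $(a,a^*)$ and $(x_0,x_0^*)$ to convert the otherwise uncontrolled inner product $\langle a-y,a^*\rangle$ into a quantity involving only $\|x_0-x\|$, $\|x_0^*\|$ and $M$. The essential geometric observation is that centring at $y$ (rather than at $x$) forces $a-y$ and $x_0-a$ to be collinear.

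First, from Fact~\ref{pheps:11} I pick $\delta>0$ and $M>0$ with $x+2\delta B_X\subseteq\dom A$ and $\sup_{u\in x+2\delta B_X}\|Au\|\leq M$; shrinking $\delta$ preserves this inclusion with the same $M$, so I may assume $\delta\in(0,1]$. Fix $y\in x+\delta B_X$, $t\in[0,1)$, set $a:=(1-t)y+tx_0$, and take any $a^*\in Aa$. Since $y+\delta B_X\subseteq x+2\delta B_X$ and $(a,a^*)\in\gra A$ is automatically monotonically related to $\gra A$ by monotonicity, Fact~\ref{extlem} applied with base point $y$ yields
\begin{equation}\label{eq:plan1}
\delta\|a^*\|\;\leq\;\langle a-y,a^*\rangle+(\|a-y\|+\delta)M.
\end{equation}

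Next I use the parametrisation: $a-y=t(x_0-y)$ and $x_0-a=(1-t)(x_0-y)$, so in particular $\|a-y\|=t\|x_0-y\|$. Monotonicity between $(a,a^*)\in\gra A$ and $(x_0,x_0^*)$ gives $\langle x_0-a,a^*\rangle\leq\langle x_0-a,x_0^*\rangle$, and multiplying by $\tfrac{t}{1-t}\geq0$ produces
\[
\langle a-y,a^*\rangle\;=\;\tfrac{t}{1-t}\langle x_0-a,a^*\rangle\;\leq\;t\langle x_0-y,x_0^*\rangle\;\leq\;t\,\|x_0-y\|\,\|x_0^*\|.
\]
(The case $t=0$ reduces to $a=y\in x+\delta B_X$, where $\|a^*\|\leq M$ directly.) Substituting into \eqref{eq:plan1}, invoking $\|x_0-y\|\leq\|x_0-x\|+\delta$ and $t\leq1$, and then using $\delta\leq1$ to pass from $\delta$ to $1$ in the affine part, one arrives at
\[
\delta\|a^*\|\;\leq\;(\|x_0-x\|+\delta)(\|x_0^*\|+M)+\delta M\;\leq\;(\|x_0-x\|+1)(\|x_0^*\|+2M),
\]
which is the claimed estimate uniformly over all admissible $y$, $t$ and $a^*$.

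The only real obstacle is the choice of centre: applying Fact~\ref{extlem} at the original $x$ leaves $\langle a-x,a^*\rangle$ without any exploitable collinear structure, whereas centring at an arbitrary $y\in x+\delta B_X$ and observing $a-y\parallel x_0-a$ is precisely what converts the unknown inner product in \eqref{eq:plan1} into the bounded quantity $t\|x_0-y\|\|x_0^*\|$ through monotonicity against $(x_0,x_0^*)$. The final bookkeeping — absorbing the various $\delta$'s into a clean multiple of $(\|x_0-x\|+1)(\|x_0^*\|+2M)$ — is routine once $\delta\leq1$ is imposed.
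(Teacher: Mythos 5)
Your proof is correct and follows essentially the same route as the paper's: apply Fact~\ref{extlem} recentred at $y\in x+\delta B_X$ to get $\delta\|a^*\|\leq\langle a-y,a^*\rangle+(\|a-y\|+\delta)M$, then use the monotone relatedness of $(x_0,x_0^*)$ together with the collinearity $a-y=t(x_0-y)$ to bound $\langle a-y,a^*\rangle\leq t\langle x_0-y,x_0^*\rangle$, and finish with arithmetic. Your explicit normalization $\delta\leq 1$ is a small tidy improvement, since the paper's final inequality implicitly requires it as well.
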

\begin{proof}
Since $x\in\inte\dom A$, by Fact~\ref{pheps:11},
there exist $\delta>0$ and $M>0$ such that
\begin{align}
x+2\delta
B_X\subseteq\dom A \quad\text{and}\quad\sup_{a^*\in Aa}\|a^*\|\leq M,\quad \forall a\in (x+2\delta
B_X).\label{ClCD:4}
\end{align}
Let $y\in x+\delta
B_X$. Then by \eqref{ClCD:4},
\begin{align}
y+\delta
B_X\subseteq\dom A \quad\text{and}\quad\sup_{a^*\in Aa}\|a^*\|\leq M,\quad \forall a\in (y+\delta
B_X).\label{ClCD:5}
\end{align}
Let $t\in\left[0,1\right[$ and $a^*\in A((1-t)y+tx_0)$.
By the assumption that $(x_0,x_0^*)$ is monotonically related to
$\gra A$, we have
\begin{align*}\big\langle a^*-x^*_0,(1-t)(y-x_0)\big\rangle=\big\langle a^*-x^*_0,(1-t)y+tx_0-x_0\big\rangle\geq0.
\end{align*}
Thus
\begin{align}
\langle a^*,x_0-y\rangle\leq\langle x_0-y,x^*_0\rangle.\label{ClCD:1}
\end{align}
By Fact~\ref{extlem} and \eqref{ClCD:5},
\begin{align}
\delta \|a^*\|&\leq \big\langle (1-t)y+tx_0-y,a^*\big\rangle+\big(\|(1-t)y+tx_0-y\|+\delta\big)M\nonumber\\
&\leq
\big\langle t(x_0-y),a^*\big\rangle+\big(\|x_0-y\|+\delta\big)M
\nonumber\\
&\leq \big\langle t(x_0-y),a^*\big\rangle+\big(\|x_0-x\|+2\delta\big)M\quad\text{(since $y\in x+\delta B_X$)}
.\label{ClCD:2}
\end{align}
Then by \eqref{ClCD:2} and \eqref{ClCD:1},
\begin{align}
\|a^*\|&\leq \frac{1}{\delta} t\langle x_0-y,x^*_0\rangle+\frac{M}{\delta}\|x_0-x\|+2M
\leq \frac{1}{\delta}\|x_0-y\|\cdot\|x_0^*\|+\frac{M}{\delta}\|x_0-x\|+2M
\nonumber\\
&\leq\frac{1}{\delta}\big(\|x_0-x\|+\delta)\|x_0^*\|+\frac{M}{\delta}\|x_0-x\|+2M
\quad\text{(since $y\in x+\delta B_X$)}\nonumber\\
&\leq \frac{1}{\delta}\|x_0-x\|\cdot\|x_0^*\|
+\|x^*_0\| +\frac{M}{\delta}\|x_0-x\|+2M\nonumber\\
&=\frac{1}{\delta}\|x_0-x\|\big(\|x_0^*\|+M\big)
+\|x^*_0\| +2M\nonumber\\
&\leq\frac{1}{\delta}\big(\|x_0-x\| +1\big)
\big(\|x^*_0\|+2M\big).\nonumber
\end{align}
Hence
\begin{align*}
\sup_{a\in\left[ x+\delta B_X,\,x_0\right[,\, a^*\in Aa}\|a^*\|
\leq\frac{1}{\delta}\left(\|x_0-x\| +1\right)
\left(\|x^*_0\|+2M\right).
\end{align*}
We now have the required estimate.
\end{proof}

The following result --- originally conjectured by the first author in \cite{Bor4}  --- was established by  Voisei in \cite[Theorem~37]{VoiseiIn} as part of a more complex set of results.
We next give a somewhat simpler proof by applying a similar technique to that
  used in   the proof of \cite[Prop~3.1, subcase 2]{Yao3}.

\begin{theorem}[Eventual boundedness]\label{bondednet:1}
Let $A:X\rightrightarrows X^*$ be  monotone
 such that $\inte\dom A\neq\varnothing$.
Then every norm $\times$ weak$^*$ convergent net in $\gra A$ is eventually bounded.
\end{theorem}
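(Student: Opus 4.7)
The plan is to argue by contradiction. Suppose $(a_{\alpha},a^*_{\alpha})_{\alpha\in\Gamma}$ is a net in $\gra A$ with $a_{\alpha}\longrightarrow x$ and $a^*_{\alpha}\weakstarly x^*$, but $(a^*_{\alpha})$ is not eventually bounded in $X^*$. A standard directed-set construction (the negation of eventual boundedness guarantees, for each $n\in\NN$ and each $\alpha_0\in\Gamma$, some $\alpha\succeq_\Gamma\alpha_0$ with $\|a^*_\alpha\|>n$) produces a subnet along which $\|a^*_\alpha\|\to+\infty$; relabeling, I will assume the original net has this property, while retaining the norm and weak$^*$ convergences inherited by every subnet.

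Fix any $\bar{x}\in\inte\dom A$, which is nonempty by hypothesis, and invoke Fact~\ref{pheps:11} to obtain $\delta>0$ and $M>0$ with $\bar{x}+\delta B_X\subseteq\dom A$ and $\sup_{y^*\in Ay}\|y^*\|\leq M$ for every $y\in\bar{x}+\delta B_X$. Since each $(a_\alpha,a^*_\alpha)\in\gra A$ is trivially monotonically related to $\gra A$, Fact~\ref{extlem} applied with $(z,z^*)=(a_\alpha,a^*_\alpha)$ yields
\[
\langle a_\alpha-\bar{x},\,a^*_\alpha\rangle\;\geq\;\delta\|a^*_\alpha\|-(\|a_\alpha-\bar{x}\|+\delta)M.
\]

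Next, set $u^*_\alpha:=a^*_\alpha/\|a^*_\alpha\|$, so $\|u^*_\alpha\|=1$. Dividing the previous inequality by $\|a^*_\alpha\|$ gives
\[
\langle a_\alpha-\bar{x},\,u^*_\alpha\rangle\;\geq\;\delta-(\|a_\alpha-\bar{x}\|+\delta)M/\|a^*_\alpha\|.
\]
For each $v\in X$ we have $\langle v,a^*_\alpha\rangle\to\langle v,x^*\rangle$, a finite scalar, while $\|a^*_\alpha\|\to+\infty$, so $\langle v,u^*_\alpha\rangle\to 0$; hence $u^*_\alpha\weakstarly 0$. Splitting $\langle a_\alpha-\bar{x},u^*_\alpha\rangle=\langle a_\alpha-x,u^*_\alpha\rangle+\langle x-\bar{x},u^*_\alpha\rangle$, the first summand has modulus at most $\|a_\alpha-x\|\to 0$ (since $\|u^*_\alpha\|=1$), and the second tends to $0$ by the weak$^*$ convergence just established. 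On the other hand, the right-hand side tends to $\delta>0$ because $\|a_\alpha-\bar{x}\|\to\|x-\bar{x}\|$ is bounded while $\|a^*_\alpha\|\to+\infty$. This forces $0\geq\delta$, the desired contradiction.

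The main obstacle is conceptual rather than computational: the directional estimate from Fact~\ref{extlem} gives $\delta\|a^*_\alpha\|\leq\langle a_\alpha-\bar{x},a^*_\alpha\rangle+O(1)$, but the pairing of a norm-convergent net with a merely weak$^*$-convergent net need not pass to the limit without a priori boundedness of the latter --- which is precisely what we are trying to establish. Renormalizing to the unit sphere of $X^*$ sidesteps this: under the putative blow-up, the weak$^*$ limit of the normalized net $(u^*_\alpha)$ is forced to be $0$, and the norm convergence $a_\alpha\to x$ then interacts cleanly with this weak$^*$ null limit to close the argument.
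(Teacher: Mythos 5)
Your proof is correct and follows essentially the same route as the paper's: assume a blow-up subnet, invoke the lower bound $\langle a_\alpha-\bar{x},a^*_\alpha\rangle\geq\delta\|a^*_\alpha\|-(\|a_\alpha-\bar{x}\|+\delta)M$ from Fact~\ref{extlem}, normalize, and derive a contradiction between the persistent directional bound $\delta>0$ and the vanishing of the normalized pairings. The one small streamlining is that you observe directly that $a^*_\alpha/\|a^*_\alpha\|\weakstarly 0$ (since $\langle v,a^*_\alpha\rangle$ converges for each $v$ while $\|a^*_\alpha\|\to+\infty$), which lets you dispense with the Banach--Alaoglu subnet extraction the paper uses to reach the same contradiction.
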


\begin{proof}
As the result and hypotheses are again invariant under translation, we can and do suppose that $0\in\inte\dom A$. Let
$(a_{\alpha}, a^*_{\alpha})_{\alpha\in\Gamma}$ in $\gra A$
be such that
\begin{align}
\text{$(a_{\alpha}, a^*_{\alpha})$
norm $\times$ weak$^*$ converges to $(x,x^*)$}.
 \label{NCW:00a}
 \end{align}
 Clearly, it suffices to show that $(a^*_{\alpha})_{\alpha\in\Gamma}$
  is eventually bounded. Suppose to
the contrary that $(a^*_{\alpha})_{\alpha\in\Gamma}$
is not eventually bounded. Then there exists a subnet of
$(a^*_{\alpha})_{\alpha\in\Gamma}$,  for convenience,
still denoted by $(a^*_{\alpha})_{\alpha\in\Gamma}$, such that
\begin{align}
\lim_{\alpha}\|a^*_{\alpha}\|=+\infty.\label{NWC:a01}
\end{align} We can and do suppose that $a^*_{\alpha}\neq0, \forall \alpha\in\Gamma$.
By Fact~\ref{extlem}, there exist $\delta>0$ and $M>0$ such that
\begin{align}
\langle a_{\alpha}, a^*_{\alpha}\rangle
\geq \delta\|a^*_{\alpha}\|-(\|a_{\alpha}\|+\delta) M,\quad\forall\alpha\in\Gamma.\label{NWC:a1}
\end{align}
Then we have
\begin{align}
\langle a_{\alpha}, \frac{a^*_{\alpha}}{\|a^*_{\alpha}\|}\rangle
\geq \delta-\frac{(\|a_{\alpha}\|+\delta) M}{\|a^*_{\alpha}\|},\quad\forall\alpha\in\Gamma.\label{NWCE:1}
\end{align}
By Fact~\ref{BaAl}, there  exists a weak* convergent \emph{subnet}
$(a^*_\beta)_{\beta\in I}$ of $(a^*_\alpha)_{\alpha\in\Gamma}$, say
\begin{align}\tfrac{a^*_\beta}{\|a^*_\beta\|}
\weakstarly  a_{\infty}^*\in X^*.\label{FCGG:9}\end{align}
Then taking the limit along the subset in \eqref{NWCE:1}, by \eqref{NCW:00a} and \eqref{NWC:a01},
 we have
\begin{align}
\langle x, a_{\infty}^*\rangle
\geq \delta.\label{NWC:1}
\end{align}
On the other hand, by \eqref{NCW:00a}, we have
\begin{align}
\langle x,a^*_{\alpha}\rangle\longrightarrow\langle x,x^*\rangle.\label{NWC:2}
\end{align}
Dividing by $\|a^*_{\alpha}\|$ in both sides of \eqref{NWC:2},
 then by \eqref{NWC:a01} and \eqref{FCGG:9}
 we take the limit along the subnet again to get
\begin{align}
\langle x, a^*_{\infty}\rangle=0.\label{NWC:3}
\end{align}
The above inequality contradict \eqref{NWC:1}.
Hence $(a_{\alpha}, a^*_{\alpha})_{\alpha\in\Gamma}$ is eventually bounded.
\end{proof}

\begin{corollary}[Norm-weak$^*$ closed graph] \label{CorCL:1}
Let $A:X\rightrightarrows X^*$ be maximally monotone
 such that $\inte\dom A\neq\varnothing$.
Then $\gra A$ is norm $\times$ weak$^*$ closed.
\end{corollary}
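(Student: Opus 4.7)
The plan is to combine the two preceding results directly. Take any net $(a_\alpha,a^*_\alpha)_{\alpha\in\Gamma}$ in $\gra A$ that converges in norm $\times$ weak$^*$ to some $(x,x^*)\in X\times X^*$; I want to show $(x,x^*)\in\gra A$.

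First I would apply Theorem~\ref{bondednet:1} (Eventual boundedness), whose hypotheses $A$ monotone and $\inte\dom A\neq\varnothing$ are part of our assumptions, to conclude that the dual component $(a^*_\alpha)_{\alpha\in\Gamma}$ is eventually bounded in $X^*$. Next, with maximal monotonicity of $A$ now being used, I would invoke Fact~\ref{normbn}, which says precisely that a norm $\times$ weak$^*$ convergent net in $\gra A$ with eventually bounded dual part must have its limit in $\gra A$. This yields $(x,x^*)\in\gra A$, proving that $\gra A$ is norm $\times$ weak$^*$ closed.

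There is no real obstacle here: the theorem is a one-line consequence of Theorem~\ref{bondednet:1} and Fact~\ref{normbn}, and the only thing worth noting is that maximality enters only through Fact~\ref{normbn}, while the eventual boundedness step needs merely monotonicity plus $\inte\dom A\neq\varnothing$.
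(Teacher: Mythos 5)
Your proposal is correct and is exactly the paper's argument: the authors' entire proof reads ``Apply Fact~\ref{normbn} and Theorem~\ref{bondednet:1}'', which is precisely the two-step combination you describe. Your observation about where maximality versus mere monotonicity is used is accurate and consistent with the hypotheses of those two results.
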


\begin{proof}
Apply Fact~\ref{normbn} and Theorem~\ref{bondednet:1}~.
\end{proof}

\begin{example}[Failure of graph to be norm-weak$^*$ closed]\label{ex:notclosed}
In \cite{BFG}, the authors showed that the statement of Corollary~\ref{CorCL:1}
cannot hold without the assumption of the nonempty interior domain even for the
subdifferential operators --- actually it  fails in the bw$^*$ topology. More precisely (see \cite{BFG} or
\cite[Example~21.5]{BC2011}):
 Let $f:\ell^2(\NN)\rightarrow\RX$ be defined by
 \begin{align}
 x\mapsto\max\big\{1+\langle x, e_1\rangle,\sup_{2\leq n\in\NN}
 \langle x, \sqrt{n}e_n\rangle\big\},\label{NormToEx:1}
 \end{align}
 where $e_n:=(0,\ldots,0,1,0,\cdots,0):$ the $n$th entry is $1$ and the others
are $0$.
 Then $f$ is proper lower semicontinuous and convex,
 but $\partial f$ is not norm $\times$ weak$^*$ closed.
 A more general construction in an infinite-dimensional
  Banach space $E$ is also given in \cite[Section~3]{BFG}. It is as follows:

  Let $Y$ be an infinite dimensional separable subspace of $E$, and $(v_n)_{n\in\NN}$ be
  a \emph{normalized Markushevich basis} of $Y$ with the dual coefficients $(v^*_n)_{n\in\NN}$.
  We defined $v_{p,m}$ and $v^*_{p,m}$ by
  \begin{align*}
  v_{p,m}:=\frac{1}{p}(v_p+v_{p^m})\quad\text{and}\quad
  v^*_{p,m}:=v_p^*+(p-1)v^*_{p^m},\quad m\in\NN,\, p\,\text{is prime}.
  \end{align*}
  Let $f: E\rightarrow\RX$ be defined by
 \begin{align}
 x\mapsto\iota_Y (x)+\max\big\{1+\langle x, v^*_1\rangle,\sup_{2\leq m\in\NN,\, p\,\text{is prime}}
 \langle x, v^*_{p,m}\rangle\big\}.
 \end{align}
  Then $f$ is proper lower semicontinuous and convex. We have that
$\partial f$ is not norm $\times$ bw$^*$ closed
 and hence $\partial f$ is not norm $\times$ weak$^*$ closed.
\qede
\end{example}

\begin{corollary}\label{CorCL:2}
Let $A:X\rightrightarrows X^*$ be maximally monotone with
$ \inte\dom A\neq\varnothing$. Assume that $S\subseteq\dom A$.
Then $\gra A_{S}\subseteq\gra A$ and in consequence
$\overline{\conv\left[A_{S}(x)\right]}^{\wk}\subseteq Ax, \forall x\in\dom A$.
Moreover, $Ax=A_S(x), \forall x\in S$ and hence $Ax=A_{\inte}(x), \forall x\in\inte\dom A$.
\end{corollary}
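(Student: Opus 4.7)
The plan is to derive everything from the norm $\times$ weak$^{*}$ closedness of $\gra A$ (Corollary~\ref{CorCL:1}), together with the elementary fact that values of a maximally monotone operator are weak$^{*}$ closed and convex.

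First, I would prove the key inclusion $\gra A_S \subseteq \gra A$. Let $(x,x^{*})\in\gra A_S$. By the definition in \eqref{Deintcl}, there is a net $(x_{\alpha},x^{*}_{\alpha})_{\alpha\in\Gamma}$ in $\gra A\cap(S\times X^{*})$, hence in $\gra A$, with $x_{\alpha}\longrightarrow x$ and $x^{*}_{\alpha}\weakstarly x^{*}$. Since $A$ is maximally monotone with $\inte\dom A\neq\varnothing$, Corollary~\ref{CorCL:1} tells us $\gra A$ is norm $\times$ weak$^{*}$ closed, so $(x,x^{*})\in\gra A$. This directly gives $A_S(x)\subseteq Ax$ for every $x\in\dom A$.

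Second, for the convex weak$^{*}$ closure statement, I would invoke the fact that for any maximally monotone $A$ and any $x\in X$, the set $Ax$ is convex and weak$^{*}$ closed; indeed, by maximality
\begin{align*}
Ax=\bigcap_{(y,y^{*})\in\gra A}\bigl\{z^{*}\in X^{*}\mid \langle x-y,z^{*}-y^{*}\rangle\geq 0\bigr\},
\end{align*}
which is an intersection of weak$^{*}$ closed half-spaces. Taking the weak$^{*}$ closed convex hull on both sides of $A_S(x)\subseteq Ax$ therefore preserves the inclusion, yielding $\overline{\conv[A_S(x)]}^{\wk}\subseteq Ax$.

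Third, to upgrade the inclusion to equality when $x\in S$, I would argue the reverse direction trivially: if $x\in S$ and $x^{*}\in Ax$, then $(x,x^{*})\in\gra A\cap(S\times X^{*})\subseteq\gra A_S$ (any set is contained in its own closure), so $x^{*}\in A_S(x)$. Combined with the first step this gives $Ax=A_S(x)$ for all $x\in S$. Finally, the assertion $Ax=A_{\inte}(x)$ for $x\in\inte\dom A$ is just the special case $S:=\inte\dom A\subseteq\dom A$.

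The only potentially delicate point is the convex weak$^{*}$ closedness of $Ax$, but this is a standard and immediate consequence of maximality; once it is in hand, the corollary is essentially a bookkeeping exercise built on Corollary~\ref{CorCL:1}.
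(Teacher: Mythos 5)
Your proposal is correct and follows essentially the same route as the paper: deduce $\gra A_S\subseteq\gra A$ from the definition \eqref{Deintcl} together with the norm $\times$ weak$^{*}$ closedness of $\gra A$ (Corollary~\ref{CorCL:1}), use the weak$^{*}$ closed convexity of $Ax$ for the hull inclusion, and note the trivial reverse inclusion $Ax\subseteq A_S(x)$ for $x\in S$. The only difference is that you spell out the half-space justification for the convexity and weak$^{*}$ closedness of $Ax$, which the paper simply asserts.
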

\begin{proof}
By \eqref{Deintcl} and Corollary~\ref{CorCL:1}, $\gra A_{S}\subseteq \gra A$.
Since $A$ is maximally monotone,
 (for every $x\in\dom A$), $Ax$ is convex and weak$^*$ closed. Thus $\overline{\conv\left[A_{S}(x)\right]}^{\wk}\subseteq Ax, \forall x\in\dom A$.
Let $x\in S$. Then by \eqref{Deintcl} again, $Ax\subseteq A_{S}(x)$ and hence
$Ax=A_{S}(x)$. Thus we have $A=A_{\inte}$ on $\inte\dom A$.
\end{proof}

We now turn to consequences of these  boundedness results.

\section{Structure of maximally monotone operators}\label{s:main}

A useful consequence of the Hahn-Banach separation principle \cite{BorVan} is:

\begin{proposition}\label{PrClC:4}
Let $D, F$ be nonempty subsets of $X^*$, and $C$
 be a  convex set of $X$ with $\inte C\neq\varnothing$.
 Assume that $x\in C$ and that for every $v\in \inte T_C(x)$,
\begin{align*}\sup\langle D, v\rangle\leq\sup\langle F, v\rangle<+\infty.
\end{align*}
Then
\begin{align}
D\subseteq \overline{\conv F+N_C(x)}^{\wk}.\label{PrClC:4E}
\end{align}
\end{proposition}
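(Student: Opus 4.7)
The plan is a Hahn--Banach argument by contradiction. Set $E:=\overline{\conv F+N_C(x)}^{\wk}$, which is a weak$^*$-closed convex subset of $X^*$. Suppose some $d\in D$ fails to lie in $E$. Because the weak$^*$-continuous linear functionals on $X^*$ are exactly the evaluations $x^*\mapsto\langle x^*,v\rangle$ with $v\in X$, the strict separation theorem in the locally convex space $(X^*,\wk)$ produces $v\in X$ and $\alpha\in\RR$ with
\begin{align*}
\langle d,v\rangle>\alpha\geq\sup_{e^*\in E}\langle e^*,v\rangle.
\end{align*}

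Next I would exploit the cone structure of $N_C(x)$. Fix any $f\in\conv F$; since $N_C(x)$ is a convex cone, $f+tw^*\in E$ for every $t\geq 0$ and $w^*\in N_C(x)$, so $\langle f,v\rangle+t\langle w^*,v\rangle\leq\alpha$. Letting $t\to+\infty$ forces $\langle w^*,v\rangle\leq 0$ for all $w^*\in N_C(x)$, i.e., $v\in T_C(x)$, while $t=0$ yields $\sup\langle F,v\rangle\leq\alpha$. The main obstacle is now clear: separation only produced $v\in T_C(x)$, whereas the hypothesis is stated for $v\in\inte T_C(x)$. I would repair this by convex perturbation toward an interior point. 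By Fact~\ref{FTaC:1}, the assumption $\inte C\neq\emptyset$ ensures $\inte T_C(x)\neq\emptyset$; pick any $v_0\in\inte T_C(x)$ and set $v_t:=(1-t)v+tv_0$ for $t\in\zo$. Convexity of $T_C(x)$ together with $v_0\in\inte T_C(x)$ gives $v_t\in\inte T_C(x)$, so the hypothesis applies and for each $t\in\zo$,
\begin{align*}
\langle d,v_t\rangle\leq\sup\langle F,v_t\rangle\leq(1-t)\sup\langle F,v\rangle+t\sup\langle F,v_0\rangle,
\end{align*}
where $\sup\langle F,v_0\rangle<+\infty$ is also guaranteed by the hypothesis.

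Letting $t\to 0^+$, the left side converges to $\langle d,v\rangle$ and the right side converges to $\sup\langle F,v\rangle\leq\alpha$, which gives $\langle d,v\rangle\leq\alpha$, contradicting the strict separation. Hence every $d\in D$ lies in $E$, which is exactly \eqref{PrClC:4E}. The only delicate step is the convex perturbation into $\inte T_C(x)$; both the nonemptiness of $\inte T_C(x)$ provided by Fact~\ref{FTaC:1} and the finiteness of $\sup\langle F,v_0\rangle$ at an interior direction are essential to pass to the limit.
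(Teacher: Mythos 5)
Your proof is correct and follows essentially the same route as the paper's: a Hahn--Banach separation reducing the claim to a support-function inequality for $\overline{\conv F+N_C(x)}^{\wk}$, with the decisive step being the perturbation of the separating direction into $\inte T_C(x)$ (nonempty by Fact~\ref{FTaC:1}) and a passage to the limit as $t\to 0^+$ using sublinearity and the finiteness of $\sup\langle F,\cdot\rangle$ on $\inte T_C(x)$. The only cosmetic differences are that you argue by contradiction with a single point $d\in D$ (which lets you bypass the paper's appeal to lower semicontinuity of $z\mapsto\sup\langle D,z\rangle$) and that you derive $v\in T_C(x)$ from the cone structure of $N_C(x)$, which is just the contrapositive of the paper's Case~1.
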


\begin{proof}
The separation principle ensures that suffices to show
\begin{align}
\sup\big\langle D,h\big\rangle\leq\sup
\big\langle N_{C}(x)+F, h\big\rangle,\quad\forall h\in X.
\label{TECLC:2}
\end{align}
We consider two cases.

\emph{Case 1}: $h\notin T_{C} (x)$. We have $\sup\big\langle N_{C}(x)+F, h\big\rangle
=+\infty$ since $\sup\big\langle N_{C}(x), h\big\rangle=+\infty$.
Hence \eqref{TECLC:2} holds.

\emph{Case 2}: $h\in T_{C}(x)$. Let $v\in\inte T_{C}(x)$.
Then (for every $t>0$) $h+tv\in\inte T_{C}(x)$
 by \cite[Fact~2.2(ii)]{BBC1}.
Now $z\mapsto\sup\big\langle D, z\big\rangle$ is lower semicontinuous,
and so by the assumption, we have
\begin{align*}
\sup\big\langle D, h\big\rangle&\leq\liminf_{t\rightarrow0^+}
\sup\big\langle D, h+tv\big\rangle
\leq \liminf_{t\rightarrow0^+}\sup\big\langle
 F, h+tv\big\rangle\\
&\leq\sup\big\langle F, h\big\rangle
+\liminf_{t\rightarrow0^+}t\sup\big\langle F, v\big\rangle\\
&=\sup\big\langle F, h\big\rangle\quad\text{( since $\sup\big\langle F, v\big\rangle$ is finite)}\\
&\leq\sup\big\langle N_{C}(x)+F, h\big\rangle.
\end{align*}
Hence \eqref{TECLC:2} holds and we have \eqref{PrClC:4E} holds.
\end{proof}

The proof of Proposition~\ref{PrClC:4} was inspired partially by
 that of \cite[Theorem~4.5]{BBC1}.

We can now provide our final technical proposition.

\begin{proposition}\label{LemClC:3}
Let $A:X\rightrightarrows X^*$ be maximally monotone
with $S \subseteq \inte\dom A\neq\varnothing$ such that $S$ is dense in $\inte\dom A$. Assume that
$x\in \dom A$ and  $v\in H_{\overline{\dom A}}(x) = \inte T_{\overline{\dom A}}(x)$.
Then there exists $x^*_0\in A_{S}(x)$ such that
\begin{align}
\sup\big\langle A_{S}(x), v\big\rangle=\big\langle x^*_0,v\big\rangle
=\sup\big\langle Ax,v\big\rangle.
\label{LETS:1}
\end{align}
In particular, $\dom A_S=\dom A$.
 \end{proposition}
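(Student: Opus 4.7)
The plan is to realize $x_0^*$ as a weak$^*$-cluster point of values $a_n^* \in A(s_n)$ for points $s_n \in S$ approaching $x$ along the hypertangent direction $v$, and then to identify $\langle x_0^*, v\rangle$ with $\sigma := \sup\langle Ax, v\rangle$ using monotonicity. Fact~\ref{f:referee02c} makes $\overline{\dom A}$ convex with interior $\inte\dom A$, so Fact~\ref{FTaC:1} lets me write $v = \lambda(y - x)$ with $\lambda > 0$ and $y \in \inte\dom A$. Fact~\ref{pheps:11} then furnishes $\delta, M > 0$ with $y + 2\delta B_X \subseteq \dom A$ and $\sup_{a \in y + 2\delta B_X}\|Aa\| \leq M$. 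Testing monotonicity between any fixed $a^* \in Ay$ and an arbitrary $x^* \in Ax$ gives $\langle y - x, x^*\rangle \leq \langle y - x, a^*\rangle \leq \|y - x\| M$, hence $\sigma \leq \lambda\|y - x\|M < +\infty$.

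Next, fix any $x^*_{\mathrm{init}} \in Ax$ (nonempty since $x \in \dom A$); then $(x, x^*_{\mathrm{init}})$ is monotonically related to $\gra A$ and Lemma~\ref{LemClC:1} yields a constant $M'$ bounding $\|Aa\|$ uniformly on $\left[y + \delta B_X, x\right[$. For $t_n \downarrow 0^+$ set $y_{t_n} := (1 - t_n)x + t_n y$; the line-segment principle inside the convex set $\overline{\dom A}$ places $y_{t_n} + t_n\delta B_X$ inside $\inte\dom A$, so for $t_n$ small the density of $S$ produces $s_n \in S$ with $\|s_n - y_{t_n}\| < t_n^2$. Writing $s_n = (1 - t_n)x + t_n(y + \epsilon_n/t_n)$ with $\|\epsilon_n/t_n\| < t_n$, for $t_n < \delta$ I obtain $s_n \in \left[y + \delta B_X, x\right[$, so every choice $a_n^* \in A(s_n)$ satisfies $\|a_n^*\| \leq M'$. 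By Fact~\ref{BaAl} extract a weak$^*$-convergent subnet $a^*_{n_\beta} \weakstarly x_0^*$; since $s_n \to x$ in norm and $(s_n, a_n^*) \in \gra A \cap (S \times X^*)$, definition~\eqref{Deintcl} gives $x_0^* \in A_S(x)$.

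To identify $\langle x_0^*, v\rangle$, test against an arbitrary $x^* \in Ax$: monotonicity yields $\langle s_n - x, a_n^* - x^*\rangle \geq 0$, and since $s_n - x = (t_n/\lambda)v + \epsilon_n$ with $\|\epsilon_n\|/t_n \to 0$ and $(a_n^*)$ bounded, dividing by $t_n/\lambda$ and passing to the limit along the subnet produces $\langle v, x_0^* - x^*\rangle \geq 0$. Thus $\langle v, x_0^*\rangle \geq \sigma$; combined with $x_0^* \in A_S(x) \subseteq Ax$ (Corollary~\ref{CorCL:2}) and $\sup\langle A_S(x), v\rangle \leq \sup\langle Ax, v\rangle = \sigma$, this forces $\sigma = \langle v, x_0^*\rangle = \sup\langle A_S(x), v\rangle$, which is~\eqref{LETS:1}. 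The \emph{in particular} clause then follows because $H_{\overline{\dom A}}(x) \neq \varnothing$ for every $x \in \dom A$ by Fact~\ref{FTaC:1}, so the construction yields $x_0^* \in A_S(x)$, hence $\dom A \subseteq \dom A_S$; the reverse inclusion is Corollary~\ref{CorCL:2}.

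The main obstacle is calibrating the perturbation scale: $\epsilon_n$ must be small enough that $s_n$ still lies in the directional-boundedness region $\left[y + \delta B_X, x\right[$ (requiring $\|\epsilon_n/t_n\| < \delta$), yet also small enough that $\epsilon_n/t_n \to 0$ survives division by $t_n$ in the limit passage. The choice $\|\epsilon_n\| < t_n^2$ with $t_n \to 0$ accomplishes both, provided the radius-$t_n^2$ ball around $y_{t_n}$ sits in $\inte\dom A$ so density of $S$ applies; this is secured by the line-segment principle together with Fact~\ref{f:referee02c}.
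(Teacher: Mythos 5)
Your proof is correct and follows essentially the same route as the paper's: approach $x$ radially along $v$ through nearby points of $S$, invoke Lemma~\ref{LemClC:1} for the uniform bound on the segment $\left[y+\delta B_X,x\right[$, extract a weak$^*$ cluster point via Fact~\ref{BaAl}, and identify its pairing with $v$ by monotonicity. The only (harmless) difference is that you test the single cluster point $x_0^*$ against an arbitrary $x^*\in Ax$ in the limit, whereas the paper first takes a maximizing sequence $x_n^*\in Ax$ and calibrates the perturbation radius $\varepsilon_n$ against $\|x_n^*\|$; your ordering slightly simplifies that calibration.
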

\begin{proof}
By Corollary~\ref{CorCL:2}, $\gra A_{S}\subseteq\gra A$ and hence
\begin{align}
\sup\big\langle A_{S}(x), v\big\rangle\leq\sup\big\langle Ax,v\big\rangle.\label{LETS:2}
\end{align}
Now we show that
\begin{align}
\sup\big\langle A_{S}(x), v\big\rangle\geq\sup\big\langle Ax,v\big\rangle.\label{LETS:2b}
\end{align}
Appealing now to Fact~\ref{FTaC:1},  we can and do suppose that $v=x_0-x$,
where $x_0\in\inte\overline{\dom A}=\inte\dom A$ by Fact~\ref{f:referee02c}.
 Using Lemma \ref{LemClC:1} select $M,\delta>0$ such that $x_0+2\delta B_X\subseteq\dom A$ and
 \begin{align}\sup_{a\in\left[ x_0+\delta B_X,\,x\right[,\,
  a^*\in Aa}\|a^*\| \le M<+\infty.\label{estM}
\end{align}
Let $t\in\left]0,1\right[$. Then  by Fact~\ref{f:referee02c} again,
\begin{align}x+tB_\delta (v)=(1-t)x+tx_0 +t\delta B_X\subseteq \inte\overline{\dom A}
=\inte\dom A.\label{TECLC:3}
\end{align}
Then  by the monotonicity of $A$,
\begin{align}t\langle a^*-x^*,w\rangle=
\langle a^*-x^*,x+tw-x\rangle\geq0,\quad \forall
a^*\in A(x+tw),\, x^*\in Ax, w\in B_\delta(v).
\label{TECLC:25}
\end{align}
There exists a sequence $(x^*_n)_{n\in\NN}$ in $Ax$ such that
\begin{align}
\langle x^*_n, v\rangle\longrightarrow\sup\langle Ax,v\rangle.\label{TECLC:26}
\end{align}
Combining \eqref{TECLC:26} and \eqref{TECLC:25},
we have
\begin{align}\langle a^*-x_n^*,v+w-v\rangle\geq0,\quad \forall
a^*\in A(x+tw),\, w\in B_\delta(v),\,  n\in\NN.
\label{TECLC:27}
\end{align}
Fix $1<n\in\NN$. Thus, appealing to \eqref{estM} and \eqref{TECLC:27} yields,
\begin{align}\langle a^*,v\rangle&\geq\langle x_n^*,v\rangle-\langle a^*-x_n^*,w-v\rangle\nonumber\\
&\geq\langle x_n^*,v\rangle-(M+\|x_n^*\|)\cdot\|w-v\|
\quad \forall
a^*\in A(x+tw),\, w\in B_\delta(v),\,  n\in\NN.
\label{TECLC:28}
\end{align}
Take $\varepsilon_n:=\min\{\tfrac{1}{n(M+\|x_n^*\|)},\delta\}$ and $t_n:=\tfrac{1}{n}$.

Since $S$ is dense in $\inte \dom A$
 and $x+t_nB_{\varepsilon_n} (v)\subseteq\inte\dom A$ by \eqref{TECLC:3},
$ S\cap \left[x+t_nB_{\varepsilon_n} (v)\right]\neq\varnothing$.
  Then there exists $w_n\in X$ such that
\begin{align}
w_n \in B_{\varepsilon_n} (v),\quad\quad x+t_nw_n \in S\quad
\text{and then}\quad x+t_nw_n\longrightarrow x.\label{TECLC:29}
\end{align}
Hence, by \eqref{TECLC:28},
\begin{align}
\langle a^*, v\rangle\geq  \langle x_n^*,v\rangle -\frac{1}{n},\quad \forall
a^*\in A(x+t_nw_n).\label{ECLC:11}
\end{align}
Let $a_n^*\in A(x+t_nw_n)$. Then by \eqref{ECLC:11},
\begin{align}
\langle a^*_n, v\rangle\geq  \langle x_n^*,v\rangle -\frac{1}{n}.\label{TECLC:30}
\end{align}
By \eqref{estM} and \eqref{TECLC:3}, $(a_n^*)_{n\in\NN}$ is bounded.
Then by
Fact~\ref{BaAl}, there  exists a weak* convergent subnet of
$(a^*_\alpha)_{\alpha\in I}$ of $(a^*_n)_{n\in\NN}$ such that
\begin{align}
a_{\alpha}^*\weakstarly x_0^*\in X^*.\label{TECLC:31}
\end{align}
Then by \eqref{TECLC:29}, $x_0^*\in A_{S}(x)$ and thus by \eqref{TECLC:30},
\eqref{TECLC:31}
 and \eqref{TECLC:26}
\begin{align*}
\sup\big\langle A_{S}(x), v\big\rangle\geq\big\langle x^*_0,
 v\big\rangle\geq\sup\big\langle Ax,v\big\rangle.
\end{align*}
Hence  \eqref{LETS:2b} holds and  so does \eqref{LETS:1} by \eqref{LETS:2}.
The final conclusion then  follows from Corollary~\ref{CorCL:2} directly.
\end{proof}

An easy consequence is the reconstruction of $A$ on the interior of its domain. In the language of \cite{ph,PPN,BFK,BorZhu,BorVan} this is asserting the minimality of $A$ as a w$^*$-\emph{cusco}.

\begin{corollary}\label{CorCL:4}
Let $A:X\rightrightarrows X^*$ be maximally monotone with
$S \subseteq \inte\dom A\neq\varnothing$. for any  $S$  dense in $\inte\dom A$,
we have   $\overline{\conv\left[ A_S(x)\right]}^{\wk}=Ax=A_{\inte}(x), \forall x\in\inte\dom A$.
\end{corollary}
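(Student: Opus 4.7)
The plan is to assemble the corollary directly from Corollary~\ref{CorCL:2} and Proposition~\ref{LemClC:3} via a weak$^*$ Hahn--Banach separation. The equality $Ax = A_{\inte}(x)$ on $\inte\dom A$ is already the last assertion of Corollary~\ref{CorCL:2} (with $S = \inte\dom A$), so the only real content is to prove $\overline{\conv[A_S(x)]}^{\wk} = Ax$ for $x\in\inte\dom A$.

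First I would dispatch the easy inclusion $\overline{\conv[A_S(x)]}^{\wk} \subseteq Ax$: Corollary~\ref{CorCL:2} gives $\gra A_S \subseteq \gra A$, and maximal monotonicity of $A$ makes each $Ax$ convex and weak$^*$ closed, so taking the weak$^*$ closed convex hull preserves the inclusion.

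Second, I would upgrade Proposition~\ref{LemClC:3} from directions in $\inte T_{\overline{\dom A}}(x)$ to \emph{every} $v\in X$ whenever $x \in \inte\dom A$. The set $\overline{\dom A}$ is convex with nonempty interior by Fact~\ref{f:referee02c}, and $\inte\overline{\dom A} = \inte\dom A$. Combined with Fact~\ref{FTaC:1},
\begin{align*}
\inte T_{\overline{\dom A}}(x) \;=\; \bigcup_{\lambda>0}\lambda\bigl[\inte\dom A - x\bigr] \;=\; X,
\end{align*}
because $x\in\inte\dom A$ makes the bracket a neighbourhood of $0$. Proposition~\ref{LemClC:3} therefore delivers
\begin{align*}
\sup\langle A_S(x), v\rangle \;=\; \sup\langle Ax, v\rangle \qquad \text{for every } v \in X.
\end{align*}

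Finally, for the reverse inclusion I would argue by contradiction: if some $x^* \in Ax$ did not lie in $\overline{\conv[A_S(x)]}^{\wk}$, then since that set is a nonempty (it contains the $x_0^*$ produced by Proposition~\ref{LemClC:3}, and $\dom A_S = \dom A$) weak$^*$-closed convex subset of $X^*$, the Hahn--Banach separation theorem in the weak$^*$ topology — whose continuous linear functionals are precisely evaluations at points of $X$ — supplies $v \in X$ strictly separating $x^*$ from $\overline{\conv[A_S(x)]}^{\wk}$, giving $\langle x^*, v\rangle > \sup\langle A_S(x), v\rangle = \sup\langle Ax, v\rangle$, which contradicts $x^*\in Ax$. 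Hence $Ax \subseteq \overline{\conv[A_S(x)]}^{\wk}$, and the desired equalities follow.

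I do not expect any real obstacle: the hard work has been done in Proposition~\ref{LemClC:3} and in Corollary~\ref{CorCL:2}. The only points requiring care are (a) verifying that $\inte T_{\overline{\dom A}}(x) = X$ for $x \in \inte\dom A$ so that the support-function identity is promoted to all directions, and (b) invoking the correct form of Hahn--Banach, namely separation in the dual pair $\langle X, X^*\rangle$ against weak$^*$-closed convex sets.
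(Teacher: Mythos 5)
Your proof is correct, but it follows a genuinely different route from the paper's. The paper's proof of Corollary~\ref{CorCL:4} is a three-line appeal to the theory of minimal w$^*$-cuscos: having checked via Corollary~\ref{CorCL:2} that $\gra A_S\subseteq\gra A$ (so $A_S$ is monotone) and via Proposition~\ref{LemClC:3} that $A_S(x)\neq\varnothing$ on $\dom A$, it cites \cite[Theorem~7.13 and Corollary~7.8]{ph} --- the facts that a maximally monotone operator is a minimal w$^*$-cusco on the interior of its domain, so that any w$^*$-cusco contained in it must regenerate all of $Ax$ there. You instead give a self-contained separation argument: you observe that for $x\in\inte\dom A$ the set $\inte T_{\overline{\dom A}}(x)=\bigcup_{\lambda>0}\lambda\left[\inte\dom A-x\right]$ is all of $X$ (since the bracket is a neighbourhood of the origin), so the support-function identity $\sup\scal{A_S(x)}{v}=\sup\scal{Ax}{v}$ of Proposition~\ref{LemClC:3} holds in every direction $v$, and then the two nonempty weak$^*$-closed convex sets $\overline{\conv\left[A_S(x)\right]}^{\wk}$ and $Ax$ must coincide by Hahn--Banach separation in the dual pair $\scal{X}{X^*}$. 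Both arguments are sound, and your handling of the two delicate points (promoting the identity to all of $X$, and using the weak$^*$ form of separation) is right. What yours buys is independence from the external cusco machinery --- it is essentially the interior-point specialization of the argument the paper later runs for Theorem~\ref{TheFom:2}, where the normal-cone term $N_{\overline{\dom A}}(x)$ degenerates to $\{0\}$; what the paper's version buys is brevity and an explicit link to the w$^*$-cusco minimality interpretation it advertises immediately before the corollary.
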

\begin{proof}
Corollary~\ref{CorCL:2} shows $\gra A_{S}\subseteq \gra A$.
Thus $A_S$ is monotone. By proposition~\ref{LemClC:3}, $A_S(x)\neq\varnothing$ on $\dom A$.
Then apply  \cite[Theorem~7.13 and Corollary~7.8]{ph} and  Corollary~\ref{CorCL:2} to obtain
\begin{align*}
\overline{\conv\left[A_S(x)\right]}^{\wk}=Ax
=A_{\inte}(x), \quad\forall x\in\inte\dom A,
\end{align*}
as required.
\end{proof}

There are many possible extensions of this sort of result along the lines studied in \cite{BFK}.
Applying Proposition~\ref{LemClC:3} and Lemma~\ref{LemClC:1}, we can also quickly recapture \cite[Theorem~2.1]{Aus93}.

\begin{theorem}[Directional boundedness in Euclidean space] \label{Ausmain:1}
Suppose that $X$ is finite-dimensional. Let $A:X\rightrightarrows X^*$ be maximally monotone
and $x\in\dom A$. Assume that there exist $d\in X$ and $\varepsilon_0>0$
such that $x+\varepsilon_0 d\in\inte\dom A$. Then
\[\left[Ax\right]_d:=\big\{x^*\in Ax \mid \langle x^*,d\rangle=\sup\langle Ax,d\rangle\big\}\]
is nonempty and compact. Moreover, if a sequence $(x_n)_{n\in\NN}$ in $\dom A$ is such that
$x_n\longrightarrow x$ and
\begin{align}
\lim\frac{x_n-x}{\|x_n-x\|}=d,\label{NWC:7}
\end{align}
 then for every $\varepsilon>0$, there exists $N\in\NN$ such that
 \begin{align}
 A(x_n)\subseteq \left[Ax\right]_d+\varepsilon B_{X^*},\quad \forall n\geq N.
 \label{NWC:8}
 \end{align}
\end{theorem}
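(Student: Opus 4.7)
The plan is to split the theorem into two halves: the nonemptiness and compactness of $[Ax]_d$, and the uniform containment~\eqref{NWC:8}. The key tools are Proposition~\ref{LemClC:3} (to attain $\sup\langle Ax,d\rangle$), Fact~\ref{extlem} (for the decisive boundedness estimate), and finite-dimensionality (to extract convergent subsequences and to exploit closedness of $\gra A$ via Corollary~\ref{CorCL:1}). I first record that $\inte\dom A\neq\varnothing$ (since $x+\varepsilon_0 d$ lies in it), so by Fact~\ref{f:referee02c} one has $\inte\overline{\dom A}=\inte\dom A$, and then by Fact~\ref{FTaC:1} the vector $d=\varepsilon_0^{-1}\bigl((x+\varepsilon_0 d)-x\bigr)$ belongs to $H_{\overline{\dom A}}(x)=\inte T_{\overline{\dom A}}(x)$.

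For the first half, Proposition~\ref{LemClC:3} with $S=\inte\dom A$ and $v=d$ furnishes some $x_0^*\in A_{\inte}(x)\subseteq Ax$ with $\langle x_0^*,d\rangle=\sigma$, where $\sigma:=\sup\langle Ax,d\rangle$; thus $\sigma$ is finite and $x_0^*\in[Ax]_d$. Since $[Ax]_d$ is the face of the closed convex set $Ax$ on which $\langle\cdot,d\rangle$ attains its maximum, it is automatically closed. For boundedness, let $r^*$ be in the recession cone of $[Ax]_d$: then $\langle r^*,d\rangle=0$, and the standard divide-by-$t$-and-let-$t\to\infty$ device applied to the monotonicity inequality for $x_0^*+tr^*\in Ax$ against arbitrary $(y,y^*)\in\gra A$ shows $r^*\in N_{\overline{\dom A}}(x)$. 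Since $x+\varepsilon_0 d+\eta B_X\subseteq\overline{\dom A}$ for some $\eta>0$, the inequality $\langle r^*,\varepsilon_0 d+\eta w\rangle\leq 0$ ($w\in B_X$) together with $\langle r^*,d\rangle=0$ forces $\eta\|r^*\|\leq 0$, hence $r^*=0$. Therefore $[Ax]_d$ is bounded, hence compact.

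The second half is where the real work lies. Fix any selection $a_n^*\in A(x_n)$; I claim $(a_n^*)$ is bounded, and this is the step I expect to be the main obstacle. Suppose not; pass to a subsequence with $\|a_n^*\|\to+\infty$ and $a_n^*/\|a_n^*\|\to u^*$ of unit norm. Applying Fact~\ref{extlem} at $y:=x+\varepsilon_0 d$ to $(z,z^*)=(x_n,a_n^*)$ gives
\[
\langle x_n-y,a_n^*\rangle\geq \delta'\|a_n^*\|-(\|x_n-y\|+\delta')M
\]
for suitable $\delta',M>0$; dividing by $\|a_n^*\|$ and letting $n\to\infty$ yields $\langle-\varepsilon_0 d,u^*\rangle\geq\delta'>0$, so $\langle d,u^*\rangle<0$. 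On the other hand, monotonicity against any fixed $\bar{a}^*\in Ax$ gives
\[
\Bigl\langle \tfrac{x_n-x}{\|x_n-x\|},a_n^*\Bigr\rangle\geq \Bigl\langle \tfrac{x_n-x}{\|x_n-x\|},\bar{a}^*\Bigr\rangle\longrightarrow \langle d,\bar{a}^*\rangle,
\]
while the left-hand side equals $\|a_n^*\|\cdot\bigl\langle \tfrac{x_n-x}{\|x_n-x\|},a_n^*/\|a_n^*\|\bigr\rangle$, whose scalar factor tends to $\langle d,u^*\rangle<0$; since $\|a_n^*\|\to\infty$, the left-hand side tends to $-\infty$. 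This contradiction establishes boundedness.

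To conclude~\eqref{NWC:8}, I argue by contradiction: if it fails then for some $\varepsilon>0$ there is a subsequence with $\di(a_{n_k}^*,[Ax]_d)\geq\varepsilon$. By the boundedness just proven and finite-dimensionality, pass to a further subsequence $a_{n_k}^*\to a^*$; closedness of $\gra A$ from Corollary~\ref{CorCL:1} gives $a^*\in Ax$. The second monotonicity display above, now with $\bar{a}^*\in[Ax]_d$, yields $\langle d,a^*\rangle\geq\sigma$, and combining with the trivial $\langle d,a^*\rangle\leq\sigma$ gives $a^*\in[Ax]_d$, contradicting $\di(a_{n_k}^*,[Ax]_d)\geq\varepsilon$. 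This completes the proof.
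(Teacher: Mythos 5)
Your proof is correct, and the overall skeleton (nonemptiness via Proposition~\ref{LemClC:3}, then compactness, then uniform boundedness of $\bigcup_{n\ge N}A(x_n)$, then a convergent-subsequence contradiction using Corollary~\ref{CorCL:1}) matches the paper's. You diverge in the two boundedness sub-steps, and both of your substitutes are sound. For compactness of $\left[Ax\right]_d$, the paper applies Fact~\ref{extlem} directly at the point $x+\varepsilon_0 d$ to each $x^*\in\left[Ax\right]_d$: since $\langle d,x^*\rangle$ is pinned at the (finite, attained) value $\sup\langle Ax,d\rangle$, the estimate $-\varepsilon_0\langle d,x^*\rangle\ge\delta\|x^*\|-(\|\varepsilon_0 d\|+\delta)M$ bounds $\|x^*\|$ in one line; your recession-cone argument ($r^*\in N_{\overline{\dom A}}(x)$ with $\langle r^*,d\rangle=0$ forces $r^*=0$ because $x+\varepsilon_0 d\in\inte\overline{\dom A}$) reaches the same conclusion but is specific to finite dimensions and slightly longer. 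For the boundedness of selections $a_n^*\in A(x_n)$, the paper invokes Lemma~\ref{LemClC:1}: eventually $x_n$ lies in the drop $\left[x+\varepsilon_0 d+\delta_1 B_X,\,x\right[$, on which $A$ is uniformly bounded, which gives an explicit quantitative bound. Your normalization argument (extract $a_n^*/\|a_n^*\|\to u^*$, get $\langle d,u^*\rangle<0$ from Fact~\ref{extlem} at $x+\varepsilon_0 d$, and contradict the monotonicity inequality $\langle \tfrac{x_n-x}{\|x_n-x\|},a_n^*-\bar a^*\rangle\ge 0$) bypasses Lemma~\ref{LemClC:1} entirely, at the cost of being purely qualitative; it is essentially the same blow-up technique the paper uses elsewhere (Theorem~\ref{bondednet:1}), so nothing new is needed. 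The final contradiction step is identical in substance to the paper's.
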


\begin{proof}
By Fact~\ref{FTaC:1}, we have
$
d=\frac{1}{\varepsilon_0}(x+\varepsilon_0 d-x)
\in\frac{1}{\varepsilon_0}\left[\inte\dom A-x\right]\subseteq\inte T_{\overline{\dom A}}(x)$.
Then by Proposition~\ref{LemClC:3} and Corollary~\ref{CorCL:2}, there exists $v^*\in Ax$ such that
\begin{align}
\sup\langle Ax, d\rangle=\langle v^*,d\rangle. \label{NWC:9}
\end{align}
Hence $v^*\in\left[Ax\right]_d$ and thus $\left[Ax\right]_d \neq\varnothing$.

We next show that $\left[Ax\right]_d$ is compact.
Let $x^*\in \left[Ax\right]_d$.
By Fact~\ref{extlem},
there exist $\delta>0$ and $M>0$ such that
$
-\varepsilon_0\langle d, x^*\rangle=
\langle x-(x+\varepsilon_0 d), x^*\rangle
\geq \delta\|x^*\|-(\|\varepsilon_0 d\|+\delta) M$.
Then by \eqref{NWC:9},
$ \delta\|x^*\|\leq
(\|\varepsilon_0 d\|+\delta) M-\varepsilon_0\langle d, x^*\rangle
=(\|\varepsilon_0 d\|+\delta) M-\varepsilon_0\langle d, v^*\rangle
<+\infty$.
Hence $\left[Ax\right]_d$ is bounded. Clearly,  $\left[Ax\right]_d$ is closed and so
$\left[Ax\right]_d$ is compact.

Finally, we show that \eqref{NWC:8} holds.
By Lemma~\ref{LemClC:1} and $x+ \varepsilon_0 d\in\inte\dom A$, there exists $\delta_1>0$ such that
\begin{align}
\sup_{a\in\left[ x+ \varepsilon_0 d+{\delta_1} B_X,\,x\right[,\,
  a^*\in Aa}\|a^*\|<+\infty.\label{NWC:11}
\end{align}
By \eqref{NWC:7}, we have $\|d\|=1$ and  there exists $N\in\NN$ such that
for every $n\geq N$,
$
0<\|x_n-x\|<\varepsilon_0\quad\text{and }\quad
x_n\in x+\|x_n-x\| d+\|x_n-x\|\frac{\delta_1}{\varepsilon_0}B_X
\subseteq \left[ x+ \varepsilon_0 d+\delta_1 B_X,\,x\right[$.
Then by \eqref{NWC:11},
\begin{align}
\sup_{ a^*\in A(x_n),\,n\geq N}\|a^*\|<+\infty.\label{NWC:12}
\end{align}
Suppose to the contrary that \eqref{NWC:8} does not holds. Then there exists
$\varepsilon_1>0$ and  a subsequence $(x_{n,k})_{k\in\NN}$
of $(x_n)_{n\in\NN}$ and $x^*_{n,k}\in A(x_{n,k})$ such that
\begin{align}
x^*_{n,k}\notin \left[Ax\right]_d+\varepsilon_1 B_{X^*},\quad \forall k\in\NN.\label{NWC:13}
\end{align}
By \eqref{NWC:12},  there exists  a convergent subsequence of $(x^*_{n,k})_{k\in\NN}$,
for convenience,  still denoted by
$(x^*_{n,k})_{k\in\NN}$ such that
\begin{align}
x^*_{n,k}\longrightarrow x^*_{\infty}.\label{NWC:16}
\end{align}
Since $x_{n,k}\longrightarrow x$, by \eqref{NWC:16},
\begin{align}
(x, x^*_{\infty})\in\gra A.\label{NWC:17}
\end{align}
We claim that
\begin{align}
x^*_{\infty}\in \left[Ax\right]_d.\label{NWC:18}
\end{align}
By the monotonicity of $A$, recalling \eqref{NWC:9}, we have
$
\langle x^*_{n,k} -v^*, x_{n,k}-x\rangle\geq0,\quad\forall k\in\NN$.
Hence
\begin{align}
\langle x^*_{n,k} -v^*, \frac{x_{n,k}-x}{\|x_{n,k}-x\|}\rangle\geq0,\quad\forall k\in\NN.
\label{NWC:19}
\end{align}
Combining \eqref{NWC:16}, \eqref{NWC:7} and \eqref{NWC:19},
\begin{align}
\langle x^*_{\infty} -v^*, d\rangle\geq0.
\label{NWC:20}
\end{align}
By \eqref{NWC:9}, \eqref{NWC:20} and \eqref{NWC:17},
$x^*_{\infty}\in\left[Ax\right]_{d}$ and hence \eqref{NWC:18} holds.
Then  $x^*_{\infty}+\varepsilon_1 B_X\subseteq\left[Ax\right]_d+\varepsilon_1 B_X$
   and $x^*_{\infty}+\varepsilon_1 B_X$ contains infinitely many terms  of
    $(x^*_{n,k})_{k\in\NN}$, which contradicts \eqref{NWC:13}.
Hence, \eqref{NWC:8} holds as asserted.
\end{proof}

\begin{remark}
In the statement of \cite[Theorem~2.1]{Aus93}, the ``$x-x_n$"
in Eq (2.0) should be read as ``$x_n-x$". In his proof, the author considered it as
``$x_n -x$". \qede
\end{remark}

We next recall an alternate \emph{recession cone} description of $N_{\dom A}$. Consider
\begin{align}\label{def:rec}
{\rm  rec\,} A (x):=
\big\{x^*\in X^*\mid
\exists t_n\rightarrow 0^+, (a_n,a_n^*)\in\gra A \,
\text{such that}\,a_n\longrightarrow x,\, t_n a_n\weakstarly x^*\big\}.
\end{align}

\begin{proposition}[Recession cone]\label{Ncp:2}
Let $A:X\rightrightarrows X^*$ be monotone with $\gra A\neq\varnothing$. Then
for every $x\in\dom A$ one has
\begin{align*}
N_{\overline{\dom A}}(x)={\rm  rec \,} A (x).
\end{align*}

\end{proposition}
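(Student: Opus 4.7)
The plan is to prove the equality by establishing the two inclusions separately. The forward inclusion $\rec A(x) \subseteq N_{\overline{\dom A}}(x)$ will be a direct weak$^*$ limiting argument applied to the monotonicity inequality; the reverse inclusion will exploit maximality of $A$ (the standing setting of Section~\ref{s:main}) to realise every prescribed normal direction as an actual graph point of $A$ at $x$, after which a trivial rescaling finishes the job.

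For the forward direction, I would start from $x^* \in \rec A(x)$ witnessed by $t_n \downarrow 0^+$ and $(a_n, a_n^*) \in \gra A$ with $a_n \longrightarrow x$ and $t_n a_n^* \weakstarly x^*$. For any $(c, c^*) \in \gra A$, monotonicity yields $\langle a_n - c, a_n^* - c^*\rangle \geq 0$, which after multiplication by $t_n > 0$ rearranges as $\langle a_n - c, t_n a_n^*\rangle \geq t_n \langle a_n - c, c^*\rangle$. On the left a strongly convergent vector pairs with a weakly$^*$ convergent functional (the sequence $(t_n a_n^*)$ is norm-bounded by Banach--Steinhaus, so the cross term is controlled); on the right the scalar $t_n$ kills the bounded quantity $\langle a_n - c, c^*\rangle$. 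Passing to the limit gives $\langle c - x, x^*\rangle \leq 0$ for every $c \in \dom A$, and norm-continuity of $\langle \,\cdot\, - x, x^*\rangle$ extends this to $\overline{\dom A}$, which is exactly $x^* \in N_{\overline{\dom A}}(x)$.

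For the reverse inclusion I would take $x^* \in N_{\overline{\dom A}}(x)$, fix some $u^* \in Ax$ (available since $x \in \dom A$), and argue that for every $n \in \NN$ the pair $(x, u^* + n x^*)$ is monotonically related to $\gra A$. Indeed, for any $(c, c^*) \in \gra A$ one has $c \in \overline{\dom A}$, so the normal-cone inequality gives $\langle x - c, x^*\rangle \geq 0$; adding $n$ times this to the monotonicity inequality $\langle x - c, u^* - c^*\rangle \geq 0$ produces $\langle x - c, (u^* + nx^*) - c^*\rangle \geq 0$. Maximality of $A$ then upgrades this to $u^* + n x^* \in Ax$. Finally, setting $a_n := x$, $a_n^* := u^* + n x^*$ and $t_n := 1/n$ produces a valid witness: $a_n \longrightarrow x$ trivially and $t_n a_n^* = x^* + u^*/n$ converges to $x^*$ in norm, hence weakly$^*$, so $x^* \in \rec A(x)$.

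The main obstacle is really hypothesis bookkeeping rather than analysis. The reverse inclusion genuinely requires maximality of $A$: for the bare monotone operator with $\gra A = \{(0,0)\}$ one has $\rec A(0) = \{0\}$ while $N_{\overline{\dom A}}(0) = X^*$, so the word ``monotone'' in the statement should be read in light of the title of Section~\ref{s:main}. I also read the formula \eqref{def:rec} with $t_n a_n^*$ in place of the printed $t_n a_n$, since the latter is forced to $0$ by $a_n \longrightarrow x$ and $t_n \to 0$.
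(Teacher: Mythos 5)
Your proof is correct and follows essentially the same route as the paper's: the identical limiting argument (with norm-boundedness of $(t_na_n^*)$ from uniform boundedness) for $\rec A(x)\subseteq N_{\overline{\dom A}}(x)$, and the identical witness $a_n:=x$, $a_n^*:=u^*+nx^*$, $t_n:=1/n$ for the converse, where the paper merely compresses your monotone-relatedness-plus-maximality argument into the unproved identity $A=N_{\overline{\dom A}}+A$. Both of your editorial observations are sound: the display defining $\rec A$ should indeed read $t_na_n^*\weakstarly x^*$ (as the paper's own proof uses), and the reverse inclusion genuinely requires maximality, as your example $\gra A=\{(0,0)\}$ shows, so the hypothesis ``monotone'' in the statement is too weak as literally written.
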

\begin{proof}
Let $x\in\dom A$.
We first show that
\begin{align}
{\rm  rec\,} A (x)\subseteq N_{\overline{\dom A}}(x).\label{Ncp:2E1}
\end{align}
Let $x^*\in {\rm  rec\,} A (x)$. Then  there exist $(t_n)_{n\in\NN}$ in $\RR$ and
$(a_n,a_n^*)_{n\in\NN}$ in $\gra A$ such that \begin{align}
t_n\longrightarrow 0^+,\, a_n\longrightarrow x\quad\text{and}\quad
t_n a^*_n\weakstarly x^*.\label{Ncp:2E2}
\end{align}
By \cite[Corollary~2.6.10]{Megg}, $(t_n a^*_n)_{n\in\NN}$ is bounded.
By the monotonicity of $A$,
\begin{align*}
\langle a_n-a, a_n^*\rangle\geq \langle a_n-a,a^*\rangle, \quad\forall (a,a^*)\in\gra A.
\end{align*}
Therefore,
\begin{align}
\langle a_n-a, t_na_n^*\rangle\geq t_n\langle a_n-a,a^*\rangle, \quad\forall (a,a^*)\in\gra A.
\label{Ncp:2E3}
\end{align}
Taking the limit in \eqref{Ncp:2E3}, by \eqref{Ncp:2E2}, we have
\begin{align*}
\langle x-a, x^*\rangle\geq 0, \quad\forall a\in\dom A.
\end{align*}
Thus, $x^*\in N_{\overline{\dom A}}(x)$. Hence \eqref{Ncp:2E1} holds.

It remains to  show that
\begin{align}
N_{\overline{\dom A}}(x)\subseteq {\rm  rec\,} A (x).\label{Ncp:2E4}
\end{align}
Let $y^*\in N_{\overline{\dom A}}(x)$ and $n\in\NN$.
Take $v^*\in Ax$. Since $A=N_{\overline{\dom A}}+A$,
we have $n y^*+ v^*\in Ax$.
Set $a_n:=x, a_n^*:=n y^*+v^*$ and $t_n:=\tfrac{1}{n}$.
Then we have
\begin{align*}
a_n\longrightarrow x, t_n\longrightarrow 0^+\quad\text{and}
\quad t_n a_n^*=y^*+\frac{1}{n}v^*\longrightarrow y^*.
\end{align*}
Hence $y^*\in {\rm  rec} A (x)$ and then \eqref{Ncp:2E4} holds.

Combining \eqref{Ncp:2E1} and \eqref{Ncp:2E4}, we have
$N_{\overline{\dom A}}(x)={\rm  rec\,} A (x)$.
\end{proof}

We are now ready for our main result, Theorem~\ref{TheFom:2}, the proof of which was inspired partially by
 that of \cite[Theorem~3.1]{Yao2}.

\begin{theorem}[Reconstruction of $A$, I]\label{TheFom:2}
Let $A:X\rightrightarrows X^*$ be maximally monotone with $S \subseteq \inte\dom A\neq\varnothing$ and  with $S$ dense in $\inte\dom A$.
Then
\begin{align}
Ax&=N_{\overline{\dom A}}(x)+\overline{\conv\left[A_{S}(x)\right]}^{\wk}
={\rm  rec\,} A(x)+\overline{\conv\left[A_{S}(x)\right]}^{\wk},\label{ThNp:S1}
 \quad \forall x\in X,
\end{align}
where ${\rm  rec\,} A(x)$ is as  in \eqref{def:rec}.
\end{theorem}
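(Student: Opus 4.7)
The plan is to dispose of the trivial cases first, reduce the substantive content to a coordinated application of Propositions~\ref{PrClC:4} and~\ref{LemClC:3}, and then upgrade the resulting closure-of-sum to the sharper stated form via a support-function computation. The second equality in \eqref{ThNp:S1} is a direct restatement of Proposition~\ref{Ncp:2}. For the case reduction: if $x\notin\overline{\dom A}$, both sides are empty; if $x\in\overline{\dom A}\setminus\dom A$, then $Ax=\varnothing$ while Proposition~\ref{LemClC:3} forces $\dom A_S=\dom A$, so $A_S(x)=\varnothing$ and the right-hand side is empty too; if $x\in\inte\dom A$, then $N_{\overline{\dom A}}(x)=\{0\}$ and \eqref{ThNp:S1} collapses to the identity $Ax=\overline{\conv A_S(x)}^{\wk}$ of Corollary~\ref{CorCL:4}. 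The substantive case is therefore $x\in\dom A\setminus\inte\dom A$.

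For such $x$, the easy inclusion $N_{\overline{\dom A}}(x)+\overline{\conv A_S(x)}^{\wk}\subseteq Ax$ follows from Corollary~\ref{CorCL:2} (which yields $\overline{\conv A_S(x)}^{\wk}\subseteq Ax$) combined with the maximality of $A$: for any $f^*\in Ax$, $n^*\in N_{\overline{\dom A}}(x)$, and $(y,y^*)\in\gra A$, one has $\langle y-x,y^*-f^*-n^*\rangle=\langle y-x,y^*-f^*\rangle-\langle y-x,n^*\rangle$, in which both summands are nonnegative (the first by monotonicity, the second since $y\in\overline{\dom A}$ and $n^*\in N_{\overline{\dom A}}(x)$). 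Hence $f^*+n^*$ is monotonically related to $\gra A$ and absorbed into $Ax$ by maximality.

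For the reverse inclusion I would apply Proposition~\ref{PrClC:4} with $D=Ax$, $F=A_S(x)$, and $C=\overline{\dom A}$, using Fact~\ref{f:referee02c} for the convexity of $C$ and nonemptiness of $\inte C$, and Proposition~\ref{LemClC:3} to verify both that $F\neq\varnothing$ and the crucial comparison $\sup\langle Ax,v\rangle=\sup\langle A_S(x),v\rangle<+\infty$ for every $v\in\inte T_{\overline{\dom A}}(x)=H_{\overline{\dom A}}(x)$. This yields $Ax\subseteq\overline{\conv A_S(x)+N_{\overline{\dom A}}(x)}^{\wk}$. To sharpen this to the stated form I would pass to support functions: setting $E:=N_{\overline{\dom A}}(x)+\overline{\conv A_S(x)}^{\wk}$, the support function $\sigma_E$ equals $\sigma_{A_S(x)}$ on $T_{\overline{\dom A}}(x)$ and $+\infty$ off $T_{\overline{\dom A}}(x)$, whereas $\sigma_{Ax}$ agrees with $\sigma_{A_S(x)}$ on the dense subcone $H_{\overline{\dom A}}(x)$ by Proposition~\ref{LemClC:3} (extending to $T_{\overline{\dom A}}(x)=\overline{H_{\overline{\dom A}}(x)}$ by lower semicontinuity) and equals $+\infty$ off $T_{\overline{\dom A}}(x)$ since $Ax\supseteq f^*+N_{\overline{\dom A}}(x)$ by the easy inclusion. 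Hence $\sigma_E=\sigma_{Ax}$, and therefore $\overline{E}^{\wk}=Ax$. The principal obstacle is to show that $E$ is already weak*-closed so that the outer closure is superfluous; I expect this to follow from the eventual-boundedness theorem (Theorem~\ref{bondednet:1}) combined with the polar-cone identity $N_{\overline{\dom A}}(x)=T_{\overline{\dom A}}(x)^\circ$, by decomposing any weak*-convergent net $(f^*_\alpha+n^*_\alpha)\subseteq E$, with $f^*_\alpha\in\overline{\conv A_S(x)}^{\wk}$ and $n^*_\alpha\in N_{\overline{\dom A}}(x)$, into a bounded $\overline{\conv A_S(x)}^{\wk}$-component plus a normal-cone residual that inherits weak*-convergence.
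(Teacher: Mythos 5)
Your architecture tracks the paper's quite closely: the easy inclusion via $A=A+N_{\overline{\dom A}}$ and Corollary~\ref{CorCL:2}, the reverse inclusion $Ax\subseteq\overline{\conv\left[A_{S}(x)\right]+N_{\overline{\dom A}}(x)}^{\wk}$ via Propositions~\ref{PrClC:4} and~\ref{LemClC:3}, the identification $N_{\overline{\dom A}}(x)={\rm rec\,}A(x)$ via Proposition~\ref{Ncp:2}, and then the removal of the outer weak$^*$ closure, which you rightly flag as the principal obstacle. The gap is exactly there: your mechanism for that last step does not work as stated. Given a weak$^*$-convergent net $f^*_\alpha+n^*_\alpha$ with $f^*_\alpha\in\overline{\conv\left[A_{S}(x)\right]}^{\wk}$ and $n^*_\alpha\in N_{\overline{\dom A}}(x)$, you propose to extract a \emph{bounded} $\overline{\conv\left[A_{S}(x)\right]}^{\wk}$-component and let the normal cone carry the residual. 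But $\overline{\conv\left[A_{S}(x)\right]}^{\wk}$ need not be bounded at a boundary point: for $f(u,v)=u^2/v$ if $v>0$, $f(0,0)=0$, $f=+\infty$ otherwise, one has $\partial f(0,0)=\menge{(a,b)}{b\leq -a^2/4}$ and $N_{\overline{\dom f}}(0,0)=\{0\}\times\left]-\infty,0\right]$, and the asserted decomposition forces the entire parabola $\menge{(a,-a^2/4)}{a\in\RR}$ into $\overline{\conv\left[(\partial f)_{S}(0,0)\right]}^{\wk}$. Nor can Theorem~\ref{bondednet:1} be invoked for $(f^*_\alpha)$: that theorem needs the net $(x,f^*_\alpha)$ itself to be norm $\times$ weak$^*$ convergent, and convergence of the sum gives no convergence of either summand separately; moreover a weak$^*$-convergent \emph{net} (unlike a sequence) need not be norm bounded, so you cannot even infer boundedness of $f^*_\alpha$ after controlling $n^*_\alpha$.

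The component that can be bounded directly is the normal-cone one, and the order of operations must be reversed. The paper translates so that $0\in\inte\dom A$ and $(0,0)\in\gra A$; then $\delta B_X\subseteq\overline{\dom A}$ yields $\langle x,n^*_\alpha\rangle\geq\delta\|n^*_\alpha\|$, while $f^*_\alpha\in Ax$ and $(0,0)\in\gra A$ yield $\langle x,f^*_\alpha\rangle\geq 0$; since $\langle x,f^*_\alpha+n^*_\alpha\rangle$ converges, $(n^*_\alpha)$ is eventually bounded. One then extracts a subnet with $n^*_\beta\weakstarly w^*\in N_{\overline{\dom A}}(x)$ (the normal cone is weak$^*$ closed), and the residual $f^*_\beta=(f^*_\beta+n^*_\beta)-n^*_\beta\weakstarly x^*-w^*$ lands in the weak$^*$-closed set $\overline{\conv\left[A_{S}(x)\right]}^{\wk}$ with no boundedness of $f^*_\beta$ required. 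Your remaining steps are sound: the case reduction is harmless (the paper omits it), the monotone-relatedness argument for $A+N_{\overline{\dom A}}=A$ is correct, and the support-function comparison $\sigma_E=\sigma_{Ax}$ is a valid repackaging of the two inclusions the paper proves directly; only the closedness step needs to be replaced as above.
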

\begin{proof}
We first show that
\begin{align}
Ax=\overline{N_{\overline{\dom A}}(x)+\conv\left[A_{S}(x)\right]}^{\wk},
 \quad \forall x\in X.\label{Th:FIp1}
\end{align}
By Corollary~\ref{CorCL:2}, we have
$\conv\left[A_{S}(x)\right]\subseteq Ax,\ \forall x\in X$.
Since likewise $A=A+N_{\overline{\dom A}}$,
\begin{align}
\overline{N_{\overline{\dom A}}(x)+\conv\left[A_{S}(x)\right]}^{\wk}
\subseteq Ax,\quad \forall x\in X.\label{TEClC:a1}
\end{align}

It remains show that
\begin{align}
Ax\subseteq\overline{N_{\overline{\dom A}}(x)+\conv\left[A_{S}(x)\right]}^{\wk},
\quad \forall x\in \dom A.
\label{TEClC:1}
\end{align}
Let $x\in\dom A$.
By the maximal monotonicity of $A$ and  Proposition~\ref{LemClC:3}, both
$Ax$ and $A_{S}(x)$
  are nonempty sets.
  Then applying Proposition~\ref{PrClC:4} and Proposition~\ref{LemClC:3} directly,
  we have
 \eqref{TEClC:1} holds and hence \eqref{Th:FIp1} holds.

We must still show
\begin{align}
Ax=N_{\overline{\dom A}}(x)+\overline{\conv\left[A_{S}(x)\right]}^{\wk},
 \quad \forall x\in X.\label{Th:FIp2}
\end{align}
Now, for every two sets $C,D\subseteq X^*$,
we have $C+\overline{D}^{\wk}\subseteq \overline{C+D}^{\wk}$. Then
by \eqref{Th:FIp1}, it suffices to show that
\begin{align}
\overline{N_{\overline{\dom A}}(x)+\conv\left[A_{S}(x)\right]}^{\wk}\subseteq
N_{\overline{\dom A}}(x)+\overline{\conv\left[A_{S}(x)\right]}^{\wk},
\quad \forall x\in\dom A.\label{TECLC:9}
\end{align}
We again can and do suppose that $0\in\inte\dom A$ and $(0,0)\in\gra A$.
Let $x\in\dom A$ and $x^*\in\overline{N_{\overline{\dom A}}(x)+\conv\left[A_{S}(x)\right]}^{\wk}$.
Then there exists nets $(x^*_{\alpha})_{\alpha\in I}$ in $N_{\overline{\dom A}}(x)$ and
$(y^*_{\alpha})_{\alpha\in I}$ in $\conv\left[A_{S}(x)\right]$ such that
\begin{align}
x^*_{\alpha}+y^*_{\alpha}\weakstarly x^*.\label{TECLC:10}
\end{align}
Now we claim that
\begin{align}
(x^*_{\alpha})_{\alpha\in I} \, \text{is eventually bounded}.\label{TECLC:11}
\end{align}
Suppose to the contrary that $(x^*_{\alpha})_{\alpha\in I}$ is not eventually bounded.
Then there exists a subnet of
$(x^*_{\alpha})_{\alpha\in I}$,  for convenience,
still denoted by $(x^*_{\alpha})_{\alpha\in I}$, such that
\begin{align}
\lim_{\alpha}\|x^*_{\alpha}\|=+\infty.\label{TECLC:12}
\end{align} We can and do suppose that $x^*_{\alpha}\neq0, \forall \alpha\in I$.
By $0\in\inte\dom A$ and $x^*_{\alpha}\in N_{\overline{\dom A}}(x)$
 (for every $\alpha\in I$),  there exists $\delta>0$  such that $\delta B_X\subseteq
 \overline{\dom A}$ and hence we have
\begin{align}
\langle x, x^*_{\alpha}\rangle
\geq\sup_{b\in B_{X}}\langle x^*_{\alpha},\delta b\rangle=
 \delta\|x^*_{\alpha}\|.\label{TECLC:13}
\end{align}
Thence, we have
\begin{align}
\langle x, \frac{x^*_{\alpha}}{\|x^*_{\alpha}\|}\rangle
\geq \delta.\label{TECLC:14}
\end{align}
By Fact~\ref{BaAl}, there  exists a weak* convergent subnet
$(x^*_\beta)_{\beta\in \Gamma}$ of $(x^*_\alpha)_{\alpha\in I}$, say
\begin{align}\frac{x^*_\beta}{\|x^*_\beta\|}
\weakstarly  x_{\infty}^*\in X^*.\label{TECLC:15}\end{align}
Taking the limit along the subnet in \eqref{TECLC:14},  by \eqref{TECLC:15},
we have
\begin{align}
\langle x, x_{\infty}^*\rangle
\geq \delta.\label{TECLC:16}
\end{align}
By \eqref{TECLC:10} and \eqref{TECLC:12}, we have
\begin{align}
\frac{x^*_{\alpha}}{\|x^*_{\alpha}\|}+\frac{y^*_{\alpha}}{\|x^*_{\alpha}\|}\weakstarly 0.\label{TECLC:17}
\end{align}
And so by \eqref{TECLC:15},
\begin{align}
\frac{y^*_{\beta}}{\|x^*_{\beta}\|}\weakstarly -x_{\infty}^*.\label{TECLC:18}
\end{align}
By Corollary~\ref{CorCL:2}, $\conv\left[A_{S}(x)\right]\subseteq Ax$, and hence
$(y^*_{\alpha})_{\alpha\in I}$ is in $Ax$. Since $(0,0)\in\gra A$, we have
$\langle y^*_{\alpha},x\rangle\geq 0$ and so
\begin{align}
\big\langle \frac{y^*_{\beta}}{\|x^*_{\beta}\|},x\big\rangle\geq 0.\label{TECLC:19}
\end{align}
Using \eqref{TECLC:18} and taking the limit along the subnet in  \eqref{TECLC:19} we get
\begin{align}
\big\langle -x_{\infty}^*,x\big\rangle\geq 0,\label{TECLC:20}
\end{align}
which contradicts \eqref{TECLC:16}.
Hence, $(x^*_{\alpha})_{\alpha\in I}$ is eventually bounded and thus \eqref{TECLC:11} holds.

Then by Fact~\ref{BaAl} again, there exists a weak$^*$ convergent subset of
$(x^*_{\alpha})_{\alpha\in I}$,  for convenience, still denoted by $(x^*_{\alpha})_{\alpha\in I}$ which lies in the normal cone,
such that $x^*_{\alpha}\weakstarly w^*\in X^*$. Hence $w^*\in N_{\overline{\dom A}}(x)$
and $y^*_{\alpha}\weakstarly x^*-w^*\in\overline{\conv\left[ A_{S}(x)\right]}^{\wk}$ by \eqref{TECLC:10}.
Hence $x^*\in N_{\overline{\dom A}}(x)+\overline{\conv\left[ A_{S}(x)\right]}^{\wk}$ so that \eqref{TECLC:9}
holds.  Then we apply Proposition~\ref{Ncp:2} to get \eqref{ThNp:S1} directly.
\end{proof}

\begin{remark}
If $X$ is a \emph{weak Asplund space} (as holds if $X$ has a G\^{a}teaux smooth equivalent norm, see \cite{ph,PPN,BFK}),
the nets defined in $A_{S}$  in
Proposition~\ref{LemClC:3} and Theorem~\ref{TheFom:2} can be replaced by sequences.
By \cite[Chap. XIII, Notes and Remarks, page~239]{Diestel},
$B_{X^*}$ is weak$^*$ sequentially
compact. In fact, see  \cite[Chpt. 9]{BorVan}, this holds somewhat more generally.

Hence, throughout the proof of
Proposition~\ref{LemClC:3}, we can obtain weak$^*$
convergent subsequences instead of subnets. The rest of each subsequent argument is unchanged.
\qede
\end{remark}

In various classes of Banach space we can choose useful structure for $S\in S_A$, where
\begin{align*}
S_A:=\big\{
S\subseteq\inte\dom A\mid\text{$S$\, is dense in  $\inte\dom A$}\big\}.
\end{align*}

\begin{corollary}[Specification of $S_A$]\label{cor:cases} Let $A:X\rightrightarrows X^*$ be maximally monotone
with $\inte\dom A\neq\varnothing$. We may choose the dense set $S\in S_A$ to be as follows:
\begin{enumerate}
\item\label{Cor:Ca1} In a G\^{a}teaux smooth space, entirely within the residual set of non-$\sigma$ porous points of $\dom A$,
\item\label{Cor:Ca2} In an Asplund space, to include only a subset of the generic set points of single-valuedness and  norm to norm continuity of $A$,
\item\label{Cor:Ca3} In a separable Asplund space, to hold only  countably many angle-bounded points of $A$,
\item \label{Cor:Ca4}In  a weak Asplund space,  to include only a subset of the generic set of  points of single-valuedness (and norm to weak$^*$ continuity) of $A$,
    \item \label{Cor:Ca4b}In  a separable space,  to include  only  points of single-valuedness (and norm to weak$^*$ continuity) of $A$ whose complement is covered by a countable union of Lipschitz surfaces.
\item\label{Cor:Ca5} In finite dimensions, to include only points of differentiability of $A$ which are of full measure.
\end{enumerate}
\end{corollary}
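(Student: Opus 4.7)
The proof is essentially an invocation of the various classical generic structure theorems for maximally monotone operators, each of which guarantees that the set of ``good'' points in $\inte\dom A$ is dense (in fact residual, or of full measure, or complement of a negligible set in the appropriate sense). Since $S_A$ is defined as the collection of subsets of $\inte\dom A$ that are dense in $\inte\dom A$, for each case it suffices to quote the corresponding generic result and take $S$ to be that generic set (or any dense subset of it).

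My plan is to treat the six items as parallel corollaries of well-known results in the monotone operator literature, citing \cite{ph,PPN,BFK,BorVan,BorZhu}. Specifically: for (i), in a G\^ateaux smooth space, a result of Preiss--Phelps--Zaj\'{\i}\v{c}ek--type (see \cite{ph,BFK}) shows that the set of points at which $A$ fails to be single-valued (equivalently, outside the set of continuity points of the selection) is $\sigma$-porous in $\inte\dom A$; its complement is therefore residual and dense. For (ii), Kenderov's theorem in Asplund spaces gives a dense $G_\delta$ subset of $\inte\dom A$ on which $A$ is single-valued and norm-to-norm continuous. For (iv), the weak Asplund case is the original Kenderov--Christensen theorem giving dense $G_\delta$ single-valuedness with norm-to-weak$^*$ continuity. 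Each dense $G_\delta$ is automatically dense, so it lies in $S_A$.

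For (iii), in a separable Asplund space one may appeal to the selection theorem producing a countable dense set of angle-bounded (Lipschitz-type) points (see \cite{ph,BorZhu}); countability is compatible with density since $\inte\dom A$ is separable. For (v), in a general separable Banach space the Preiss--Zaj\'{\i}\v{c}ek machinery of $\Gamma$-null sets/Lipschitz surfaces applied to an arbitrary minimal w$^*$-cusco (and $A_{\inte}$ is a minimal w$^*$-cusco by Corollary~\ref{CorCL:4}) yields that the complement of single-valuedness is covered by countably many Lipschitz surfaces, whose complement is dense. Finally for (vi), in $\mathbb R^n$ Mignot's theorem (see \cite{ph,BorVan}) states that $A$ is differentiable on a set of full Lebesgue measure in $\inte\dom A$; such a set is automatically dense.

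The main obstacle here is not a mathematical one but bookkeeping: for each case I must verify that the referenced ``generic'' set is actually contained in $\inte\dom A$ (so that $S\subseteq\inte\dom A$ is preserved) and that density is claimed in $\inte\dom A$ rather than in $\dom A$ or $\overline{\dom A}$. This is unproblematic because Fact~\ref{f:referee02c} gives $\inte\dom A=\inte\overline{\dom A}$ and each of the cited theorems is stated for the open set of points of local boundedness, which by Fact~\ref{pheps:11} coincides with $\inte\dom A$. Once this identification is made, the corollary follows by simply taking $S$ to be (any dense subset of) the generic set produced in each case.
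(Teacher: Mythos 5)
Your proposal matches the paper's own proof, which likewise reduces each item to the observation that the relevant class of points is dense in $\inte\dom A$ and then cites the corresponding generic single-valuedness/differentiability theorem (Georgiev for \ref{Cor:Ca1}, Phelps for \ref{Cor:Ca2} and \ref{Cor:Ca3}, Preiss--Phelps--Namioka for \ref{Cor:Ca4}, Vesel\'y for \ref{Cor:Ca4b}, and Rockafellar--Wets or Borwein--Vanderwerff for \ref{Cor:Ca5}). The only differences are in which standard references are quoted, not in the argument itself.
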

\begin{proof} It suffices to determine in each case that the points of the given kind are dense.
\ref{Cor:Ca1}:  See \cite[Theorem 5.1]{Georg}.
\ref{Cor:Ca2}:  See \cite[Lemma~2.18 and Theorem 2.30]{ph}.
\ref{Cor:Ca3}:  See \cite[Theorem~2.19 and Theorem 2.11]{ph}.
\ref{Cor:Ca4}: See \cite[Proposition~1.1(iii) and Theorem~1.6]{PPN} or \cite[Theorem 4.31 and Example~7.2]{ph}.
\ref{Cor:Ca4b}:  See \cite{Ves1,Ves2}.
\ref{Cor:Ca5}: See \cite[Corollary~12.66(a)]{RockWets} or \cite[Exercise~9.1.1(2), page~412]{BorVan}.
\end{proof}.

These classes are sufficient but not necessary: for example, there are Asplund spaces with no equivalent G\^{a}teaux  smooth renorm \cite{BorVan}.
Note also that in \ref{Cor:Ca4b} and \ref{Cor:Ca5} we  also know that $A \diagdown S$ is a null set in the senses discussed \cite{CheZha}.

We now restrict attention to convex functions.

\begin{corollary}[Convex subgradients]\label{CorSubd}
Let $f:X\rightarrow\RX$ be proper lower semicontinuous and
 convex with $\inte\dom f\neq\varnothing$.  Let $S \subseteq \inte \dom f$ be given with $S$ dense in $\dom f$.
Then
\begin{align*}
\partial f(x)=N_{\overline{\dom f}}(x)
+\overline{\conv\left[{(\partial f)}_{S}(x)\right]}^{\wk}=N_{\dom f}(x)
+\overline{\conv\left[(\partial f)_{S}(x)\right]}^{\wk}, \quad \forall x\in X.
\end{align*}
\end{corollary}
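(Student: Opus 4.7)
The plan is to apply Theorem~\ref{TheFom:2} to the maximally monotone operator $A := \partial f$ and then translate the resulting formula into the stated one. By Fact~\ref{SubMR}, $\partial f$ is maximally monotone, so the only thing to verify before invoking Theorem~\ref{TheFom:2} is that $S$ is a suitable dense subset of $\inte\dom\partial f$. Here I would first argue $\inte\dom\partial f=\inte\dom f$: the inclusion $\dom\partial f\subseteq\dom f$ is immediate, while $\inte\dom f\subseteq\dom\partial f$ holds because a proper lower semicontinuous convex function is continuous, and hence subdifferentiable, on $\inte\dom f$. Since $S\subseteq\inte\dom f$ is assumed dense in $\dom f$, it is in particular dense in $\inte\dom f=\inte\dom\partial f$, so the hypotheses of Theorem~\ref{TheFom:2} are met.

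Applying Theorem~\ref{TheFom:2} then yields, for every $x\in X$,
\[
\partial f(x)\;=\;N_{\overline{\dom\partial f}}(x)+\overline{\conv\left[(\partial f)_S(x)\right]}^{\wk}.
\]
Because $\inte\dom f\subseteq\dom\partial f\subseteq\dom f$ and $\dom f$ is convex with nonempty interior, one has $\overline{\dom\partial f}=\overline{\dom f}$ (the closure of a convex set with nonempty interior coincides with the closure of its interior). Substituting this gives the first equality of the corollary.

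For the second equality I would appeal to the elementary observation that for any convex $C\subseteq X$ and any $x\in C$, the continuity of $c\mapsto\scal{c-x}{x^*}$ forces $\sup_{c\in C}\scal{c-x}{x^*}=\sup_{c\in\overline{C}}\scal{c-x}{x^*}$, hence $N_C(x)=N_{\overline{C}}(x)$. Taking $C=\dom f$ shows $N_{\dom f}(x)=N_{\overline{\dom f}}(x)$ whenever $x\in\dom f$; for $x\notin\dom f$ both sides are empty (matching $\partial f(x)=\varnothing$). The only real content of the argument is the pair of domain identifications $\inte\dom\partial f=\inte\dom f$ and $\overline{\dom\partial f}=\overline{\dom f}$, which are standard for lower semicontinuous convex functions; beyond this careful bookkeeping I anticipate no obstacle, as the substance is entirely contained in Theorem~\ref{TheFom:2}.
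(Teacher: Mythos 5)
Your proof is correct and follows essentially the same route as the paper: verify the hypotheses of Theorem~\ref{TheFom:2} for $A=\partial f$ via the standard identifications of $\inte\dom\partial f$ and $\overline{\dom\partial f}$ with $\inte\dom f$ and $\overline{\dom f}$, then pass from $N_{\overline{\dom f}}$ to $N_{\dom f}$ on $\dom\partial f$. The only (harmless) divergence is that the paper invokes the Br{\o}ndsted--Rockafellar theorem for $\overline{\dom\partial f}=\overline{\dom f}$, whereas you obtain it more elementarily from the sandwich $\inte\dom f\subseteq\dom\partial f\subseteq\dom f$ together with the fact that a convex set with nonempty interior has the same closure as its interior.
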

\begin{proof} By \cite[Proposition~3.3 and Proposition~1.11]{ph},
 $\inte\dom\partial f\neq\varnothing$.
By the Br{\o}ndsted-Rockafellar Theorem
(see \cite[Theorem~3.17]{ph} or \cite[Theorem~3.1.2]{Zalinescu}),
 $\overline{\dom\partial f}=\overline{\dom f}$.
  Then  we may apply Fact~\ref{SubMR} and  Theorem~\ref{TheFom:2} to
  get (for every $x\in X$) $\partial f(x)=N_{\overline{\dom f}}(x)
+\overline{\conv\left[(\partial f)_{S}(x)\right]}^{\wk}$.
We have $ N_{\overline{\dom f}}(x)= N_{\dom f}(x), \forall x\in\dom\partial f$.
 Hence  $\partial f(x)=N_{\dom f}(x)
+\overline{\conv\left[(\partial f)_{S}(x)\right]}^{\wk},\forall x\in X
$.
\end{proof}

In this case Corollary \ref{cor:cases}  specifies settings in which only points of differentiability need be used (in \ref{Cor:Ca5} we recover Alexandroff's theorem on twice differentiability of convex functions), see \cite{BorVan} for more details.

\begin{remark}
Results closely related  to Corollary~\ref{CorSubd} have been obtained in \cite{Rock70CA,BBC1,JoTh, ThZag} and elsewhere. Interestingly, in the convex case we have obtained as much information more easily than by the direct convex analysis approach of \cite{BBC1}. \qede
\end{remark}

We finish this section by refining Corollary \ref{CorCL:4} and Theorem \ref{TheFom:2}.

Let $A:X\rightrightarrows X^*$. We define $\widehat{A}:X\rightrightarrows X^*$ by
\begin{align}
\gra \widehat{A}:=\big\{(x,x^*)\in X\times X^*\mid x^*\in\bigcap_{\varepsilon>0}
\overline{\conv \left[A(x+\varepsilon B_X)\right]}^{\wk}
\big\}.\label{NeEL:1}
\end{align}
Clearly, we have $\overline{\gra A}^{\|\cdot\|\times \wk}\subseteq\gra\widehat{A}$.

\begin{theorem}[Reconstruction of $A$, II]\label{PropLe:2}
Let $A:X\rightrightarrows X^*$ be maximally monotone
with $\inte\dom A\neq\varnothing$.
\begin{enumerate}\item \label{part:i}
 Then $\widehat{A}=A$.

In particular, $A$ has property (Q); and so has a norm $\times$  weak$^*$ closed graph.

\item  \label{part:ii} Moreover, if $S \subseteq \inte \dom A$ is dense in $\inte\dom A$ then
 \begin{align}
 \widehat{A_S}(x):&=\bigcap_{\varepsilon>0}
\overline{\conv \left[A(S \cap(x+\varepsilon B_X))\right]}^{\wk}
\supseteq\overline{\conv\left[A_{S}(x)\right]}^{\wk},\label{ThNp:S2}
 \quad \forall x\in X.
\end{align}
 Thence
  \begin{align}
Ax= \widehat{A_S}(x)+\rec A(x)
,\label{ThNp:S3}
 \quad \forall x\in X.
\end{align}
\end{enumerate}
 \end{theorem}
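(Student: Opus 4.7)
\textbf{Plan for Part~\ref{part:i}.} The inclusion $\gra A \subseteq \gra \widehat A$ is immediate from \eqref{NeEL:1}: given $(x,x^*)\in\gra A$ and any $\varepsilon>0$, $x^* \in Ax \subseteq A(x+\varepsilon B_X)\subseteq\overline{\conv[A(x+\varepsilon B_X)]}^{\wk}$. For the reverse, I fix $(x,x^*)\in\gra\widehat A$ and aim to show $(x,x^*)$ is monotonically related to $\gra A$, so that maximality delivers $(x,x^*)\in\gra A$. Once $\widehat A=A$ is known, property (Q) follows immediately: if $x_\alpha\longrightarrow x$ and $x^* \in \bigcap_\alpha \overline{\conv[\bigcup_{\beta\succeq_J\alpha} A(x_\beta)]}^{\wk}$, then for each $\varepsilon>0$ eventually $\bigcup_{\beta\succeq_J\alpha}A(x_\beta)\subseteq A(x+\varepsilon B_X)$, whence $x^* \in \widehat A(x)=Ax$; norm $\times$ weak$^*$ closedness of $\gra A$ is the special case of constant tails.

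\textbf{Key averaging step.} Fix $(y,y^*)\in\gra A$ and an interior anchor $y_0\in\inte\dom A$; Fact~\ref{pheps:11} supplies $\delta,M>0$ with $y_0+2\delta B_X\subseteq\dom A$ and $\sup\{\|b^*\|\colon b\in y_0+2\delta B_X,\,b^*\in Ab\}\leq M$. For each $\varepsilon\in\left]0,\delta/2\right]$, definition \eqref{NeEL:1} provides a net $(z^*_\alpha)\subseteq\conv[A(x+\varepsilon B_X)]$ with $z^*_\alpha\weakstarly x^*$; write $z^*_\alpha=\sum_k \lambda_{\alpha,k}a^*_{\alpha,k}$ with $a^*_{\alpha,k}\in Aa_{\alpha,k}$, $a_{\alpha,k}\in x+\varepsilon B_X$. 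Applying Fact~\ref{extlem} to each $(a_{\alpha,k},a^*_{\alpha,k})$ paired against $y_0$ (which is automatic since $(a_{\alpha,k},a^*_{\alpha,k})\in\gra A$), then averaging with the weights $\lambda_{\alpha,k}$ and using $\|a_{\alpha,k}-x\|\leq\varepsilon$, yields
\[(\delta-\varepsilon)\sum_k\lambda_{\alpha,k}\|a^*_{\alpha,k}\|\ \leq\ \langle x-y_0,\,z^*_\alpha\rangle+\bigl(\varepsilon+\|x-y_0\|+\delta\bigr)M.\]
Since $\langle x-y_0,z^*_\alpha\rangle\to\langle x-y_0,x^*\rangle$, the averaged sum is eventually bounded by a constant $C$ independent of $\alpha$ and of $\varepsilon\in\left]0,\delta/2\right]$. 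Monotonicity of $A$ against $(y,y^*)$ then gives $\langle x-y,a^*_{\alpha,k}-y^*\rangle\geq-\varepsilon(\|a^*_{\alpha,k}\|+\|y^*\|)$; averaging produces $\langle x-y,z^*_\alpha-y^*\rangle\geq-\varepsilon(C+\|y^*\|)$, and the weak$^*$ limit followed by $\varepsilon\to 0^+$ completes the monotonic relation. The main obstacle lies precisely here: at a boundary point $x\in\overline{\dom A}\setminus\inte\dom A$ the individual $\|a^*_{\alpha,k}\|$ can blow up, but the averaged sum is tamed by using the weak$^*$ limit $z^*_\alpha\weakstarly x^*$ through the interior anchor, in the same spirit as Theorem~\ref{bondednet:1}.

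\textbf{Plan for Part~\ref{part:ii}.} For \eqref{ThNp:S2}, let $x^*\in A_S(x)$; by \eqref{Deintcl} there is a net $(x_\alpha,x^*_\alpha)\in\gra A\cap(S\times X^*)$ with $x_\alpha\longrightarrow x$ and $x^*_\alpha\weakstarly x^*$. For each $\varepsilon>0$, eventually $x_\alpha\in S\cap(x+\varepsilon B_X)$, so $x^*\in\overline{\conv[A(S\cap(x+\varepsilon B_X))]}^{\wk}$; intersecting over $\varepsilon$ and using that $\widehat{A_S}(x)$ is weak$^*$ closed and convex promote $A_S(x)$ to $\overline{\conv[A_S(x)]}^{\wk}\subseteq\widehat{A_S}(x)$. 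For \eqref{ThNp:S3}, Theorem~\ref{TheFom:2} gives $Ax=\rec A(x)+\overline{\conv[A_S(x)]}^{\wk}\subseteq\rec A(x)+\widehat{A_S}(x)$. The reverse inclusion uses the obvious monotonicity $\widehat{A_S}(x)\subseteq\widehat A(x)$ together with part~\ref{part:i} to get $\widehat{A_S}(x)\subseteq Ax$, Proposition~\ref{Ncp:2} identifying $\rec A(x)$ with $N_{\overline{\dom A}}(x)$, and the absorption identity $Ax+N_{\overline{\dom A}}(x)=Ax$ for maximal monotone $A$ (itself an immediate maximality argument), yielding $\widehat{A_S}(x)+\rec A(x)\subseteq Ax$.
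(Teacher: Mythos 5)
Your proposal is correct, and for Part \ref{part:i} it takes a genuinely different route from the paper. The paper proves $\gra\widehat{A}\subseteq\gra A$ by first checking $x\in\dom A$, then establishing the support-function inequality $\sup\langle\widehat{A}x,v\rangle\leq\sup\langle Ax,v\rangle$ for every $v$ in the hypertangent cone $\inte T_{\overline{\dom A}}(x)$ (using the strong directional boundedness of Lemma~\ref{LemClC:1} and a perturbation $a_n=x_n+t_nv$ into the interior), and finally invoking the separation result Proposition~\ref{PrClC:4} together with Proposition~\ref{LemClC:3} to conclude $\widehat{A}x\subseteq\overline{Ax+N_{\overline{\dom A}}(x)}^{\wk}=Ax$; it also quotes Corollary~\ref{CorCL:1} en route. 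You instead show directly that each $(x,x^*)\in\gra\widehat{A}$ is monotonically related to $\gra A$: your averaged estimate $(\delta-\varepsilon)\sum_k\lambda_{\alpha,k}\|a^*_{\alpha,k}\|\leq\langle x-y_0,z^*_\alpha\rangle+(\varepsilon+\|x-y_0\|+\delta)M$ follows correctly from Fact~\ref{extlem} applied to each $(a_{\alpha,k},a^*_{\alpha,k})$, it does control the weighted norms uniformly for $\varepsilon\in\left]0,\delta/2\right]$ because $\langle x-y_0,z^*_\alpha\rangle\rightarrow\langle x-y_0,x^*\rangle$, and the error term $-\varepsilon(C+\|y^*\|)$ then vanishes as $\varepsilon\rightarrow0^+$, so maximality finishes the job. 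This is more elementary and more self-contained: it bypasses the tangent/normal-cone and Hahn--Banach machinery (Fact~\ref{FTaC:1}, Propositions~\ref{PrClC:4} and \ref{LemClC:3}) and delivers the norm $\times$ weak$^*$ closedness of Corollary~\ref{CorCL:1} as a byproduct rather than a prerequisite; what the paper's route buys in exchange is the attained directional supremum of Proposition~\ref{LemClC:3}, which is reused in Theorem~\ref{TheFom:2} and Theorem~\ref{Ausmain:1}. Your Part \ref{part:ii} coincides with the paper's argument (the same tail-net proof of \eqref{ThNp:S2}, then Theorem~\ref{TheFom:2}, Proposition~\ref{Ncp:2}, the inclusion $\widehat{A_S}(x)\subseteq\widehat{A}(x)$ and the absorption identity $Ax+N_{\overline{\dom A}}(x)=Ax$).
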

\begin{proof} \emph{Part \ref{part:i}.}
We first show that $\gra \widehat{A}\subseteq\gra A$.
Let $(x,x^*)\in\gra \widehat{A}$.
Now we show that $x\in\dom A$. We suppose that $0\in\inte\dom A$.
Since $x^*\in \overline{\conv \left[A(x+\tfrac{1}{n}B_X)\right]}^{\wk}$(for all $n\in\NN$),
\begin{align*}
\inf\big\langle A(x+\tfrac{1}{n}B_X),x \big\rangle&=\inf\big\langle \conv \left[A(x+\tfrac{1}{n}B_X)\right],x \big\rangle
=\inf\big\langle \overline{\conv \left[A(x+\tfrac{1}{n}B_X)\right]}^{\wk},x \big\rangle\\
&<\big\langle x, x^*\big\rangle+1.
\end{align*}
Then there exists
$z^*_n\in A(z_n)$ such that
\begin{align}\langle z^*_n,x\rangle\leq \langle x^*,x\rangle+1,\label{PLETS:a2}
\end{align}
 where $z_n\in x+\tfrac{1}{n}B_X$.
By Fact~\ref{extlem}, there exist $\delta_0>0$ and $M_0>0$ such that
\begin{align*}
&\delta_0\|z_n^*\|\leq\langle z_n, z_n^*\rangle+(\|z_n\|+\delta) M_0
=\langle z_n-x, z_n^*\rangle+\langle x, z_n^*\rangle+(\|z_n\|+\delta) M_0\\
&\leq \frac{1}{n}\|z_n^*\|
+\langle x^*,x\rangle+1+(\|x\|+1+\delta) M_0,\quad\forall n\in\NN\quad\text{(by \eqref{PLETS:a2})}.
\end{align*}
Hence $(z^*_n)_{n\in\NN}$ is bounded. By Fact~\ref{BaAl}, there exists a weak$^*$ convergent limit $z^*_{\infty}$ of
 a subnet of $(z^*_n)_{n\in\NN}$. Then $z_n\longrightarrow x$ and the maximal monotonicity of
 $A$, imply that $(x, z^*_{\infty})\in\gra A$ and so $x\in\dom A$.

Now let
 $v\in\inte T_{\overline{\dom A}}(x)$. We claim that
 \begin{align}
 \sup\big\langle \widehat{A}(x), v\big\rangle\leq\sup\big\langle Ax,v\big\rangle.\label{PLETS:2}
 \end{align}
 By Fact~\ref{FTaC:1},  we can and do suppose that $v=x_0-x$,
where $x_0\in\inte\overline{\dom A}=\inte\dom A$ by Fact~\ref{f:referee02c}.
There exists a sequence $(y^*_n)_{n\in\NN}$ in $\widehat{A}x$ such that
\begin{align}
\langle y^*_n, v\rangle\longrightarrow\sup\langle \widehat{A}x,v\rangle.\label{PLETS:3}
\end{align}
Using Lemma \ref{LemClC:1} select $M,\delta>0$ such that $x_0+2\delta B_X\subseteq\dom A$ and
 \begin{align}\sup_{a\in\left[ x_0+\delta B_X,\,x\right[,\,
  a^*\in Aa}\|a^*\| \le M<+\infty.\label{PestM}
\end{align}
 Then  by Fact~\ref{f:referee02c} again,
\begin{align}\left[ x_0+\delta B_X,\,x\right[\subseteq \inte\overline{\dom A}
=\inte\dom A.\label{PLETS:4}
\end{align}
Fix $\tfrac{1}{\delta}<n\in\NN$. Since $y^*_n\in \overline{\conv \left[A(x+\tfrac{1}{n}B_X)\right]}^{\wk}$, then
$\big\langle y^*_n, v\big\rangle\leq \sup \big\langle A(x+\tfrac{1}{n}B_X), v\big\rangle$.
Then there exist $x_n\in(x+\tfrac{1}{n}B_X)$ and
  $x^*_n\in A(x_n)$ such that
\begin{align}
\langle x^*_n,v\rangle\geq \langle y^*_n, v\rangle-\frac{1}{n}.\label{PLETS:a5}
\end{align}
Set $t_n:=\tfrac{1}{\delta\, n}$. Then,
\begin{align}
a_n:&=x_n +t_n v=x_n-x+x +t_n (x_0 -x)=t_n\left(x_0+\frac{x_n -x}{t_n}\right)+(1-t_n)x\nonumber\\
&
\in t_n(x_0+\delta B_X)+(1-t_n)x.\label{PLETS:5}
\end{align}
Select $a^*_n\in A (a_n)$ by \eqref{PLETS:4}.
Then  by the monotonicity of $A$,
$t_n\langle a_n^*-x_n^*,v\rangle=
\langle a_n^*-x_n^*,a_n-x_n\rangle\geq0$.
Hence $\langle a_n^*,v\rangle\geq
\langle x_n^*,v\rangle$.
Using \eqref{PLETS:a5},
we have
\begin{align}\langle a^*_n,v\rangle\geq \langle y^*_n, v\rangle-\frac{1}{n},
\quad \forall \tfrac{1}{\delta}<n\in\NN.
\label{PLETS:6}
\end{align}
 Thus, appealing to \eqref{PestM} and \eqref{PLETS:5} shows that
$(a^*_n)_{n\in\NN}$ is bounded.
Fact~\ref{BaAl}, now yields  a weak* convergent subnet of
$(a^*_\alpha)_{\alpha\in I}$ of $(a^*_n)_{n\in\NN}$ such that
\begin{align}
a_{\alpha}^*\weakstarly x_0^*\in X^*.\label{PLETS:7}
\end{align}
By Corollary~\ref{CorCL:1} and $a_n\longrightarrow x$,
we have $x_0^*\in Ax$. Combining \eqref{PLETS:6},
\eqref{PLETS:7} and \eqref{PLETS:3}, we obtain
\begin{align*}
\sup\big\langle Ax, v\big\rangle\geq\big\langle x^*_0,
 v\big\rangle\geq\sup\big\langle \widehat{A}x,v\big\rangle.
\end{align*}
Hence  \eqref{PLETS:2} holds.
Now applying Proposition~\ref{PrClC:4} and Proposition~\ref{LemClC:3},
we have
$\widehat{A}x\subseteq \overline{Ax+N_{\overline{\dom A}}(x)}^{\wk}=Ax$.
Hence $\gra \widehat{A}\subseteq\gra A$.

Since $\gra A\subseteq\gra \widehat{A}$, we have $\widehat{A}=A$. It is immediate $A$ has property (Q)
so has a norm $\times$  weak$^*$ closed graph.

\emph{Part \ref{part:ii}.} It only remains to prove \eqref{ThNp:S3}.   We first show that
\begin{align}
A_{S}(x)\subseteq\widehat{A_S}(x), \quad\forall x\in X.\label{PLETS:8}
\end{align}
By Proposition~\ref{LemClC:3}, $\dom A_S=\dom A$.
Let $w\in X$. If $w\notin\dom A$, then clearly,
 $A_S(w)\subseteq \widehat{A_S}(w)$.
Assume that $w\in\dom A$ and $w^*\in A_{S}(w)$. Then by \eqref{Deintcl},
there exist a net $(w_{\alpha}, w^*_{\alpha})_{\alpha\in I}$ in
$\gra A\cap (S\times X^*)$
 such that $w_{\alpha}\longrightarrow w$ and $w^*_{\alpha}\weakstarly w^*$.
 The for every $\varepsilon>0$, there exists $\alpha_0\in I$ such that
 $w_{\alpha}\in x+\varepsilon B_X,\quad\forall \alpha\succeq_I\alpha_0$. Thus
 \begin{align*}
 w_{\alpha}\in S\cap(w+\varepsilon B_X)\quad\text{and then}\quad
 w^*_{\alpha}\in A\big(S\cap(w+\varepsilon B_X)\big),\quad\forall \alpha\succeq_I\alpha_0.
 \end{align*}
Hence $w^*\in\overline{A\big(S\cap(w+\varepsilon B_X)\big)}^{\wk}\subseteq \overline{\conv \left[A\big(S\cap(w+\varepsilon B_X)\big)\right]}^{\wk}$ and thus
\eqref{PLETS:8} holds.

By \eqref{PLETS:8}, we have
\begin{align}
\overline{\conv\left[A_{S}(x)\right]}^{\wk}\subseteq
\widehat{A_S}(x),\quad\forall x\in X.\end{align}
Then by Proposition~\ref{Ncp:2},
\begin{align*}
\overline{\conv\left[A_{S}(x)\right]}^{\wk}+\rec A(x) \subseteq\widehat{A_S}(x)+\rec A(x)\subseteq Ax+\rec A(x)=Ax, \quad\forall x\in X.
\end{align*}
Thus, on appealing to Theorem~\ref{TheFom:2}, we obtain \eqref{ThNp:S3}.
\end{proof}

\begin{remark}
Property (Q,) first introduced by Cesari in Euclidean space, was recently established for maximally monotone operators with nonempty domain interior in Banach space by
 Voisei  in \cite[Theorem~42]{VoiseiIn}.
\end{remark}

\section{Final examples and applications}\label{Appc:1}

In general, we do not have  $Ax=\overline{\conv\left[A_{S}(x)\right]}^{\wk}, \forall x\in\dom A$, for a maximally monotone operator
$A:X\rightrightarrows X^*$ with
$S \subseteq \inte\dom A\neq\varnothing$ such that $S$ is dense in $\dom A$.

 We give a simple example to demonstrate this.

\begin{example}
Let $C$ be a closed convex subset of $X$ with $S\subseteq\inte C\neq\varnothing$ such that
$S$ is dense in $C$.
Then $N_C$ is maximally monotone  and $\gra (N_C)_{S}=C\times \{0\}$, but $N_C(x)\neq\overline{\conv \left[(N_C)_{S}(x)\right]}^{\wk},
\forall x\in \bd C$.  We have
$\bigcap_{\varepsilon>0}
\overline{\conv \left[N_C(x+\varepsilon B_X)\right]}^{\wk}=N_C(x),\,\forall x\in X$.
\end{example}

\begin{proof}
The maximal monotonicity of $N_C$ is directly from Fact~\ref{SubMR}. Since, for every $
x\in\inte C$, $N_C(x)=\{0\}$, $\gra (N_C)_{S}=C\times \{0\}$
 by \eqref{Deintcl} and Proposition~\ref{LemClC:3}.
Hence $\overline{\conv \left[(N_C)_{S}(x)\right]}^{\wk}=\{0\},\forall x\in C$. However,
$N_C(x)$ is unbounded, $\forall x\in\bd C$.
Hence $N_C(x)\neq\overline{\conv\left[(N_C)_{S}(x)\right]}^{\wk}, \forall x\in\bd C$.

By contrast, on applying Theorem~\ref{PropLe:2}, we have $\bigcap_{\varepsilon>0}
\overline{\conv \left[N_C(x+\varepsilon B_X)\right]}^{\wk}=N_C(x),\,\forall x\in X$.
\end{proof}

While the subdifferential operators in Example~\ref{ex:notclosed} necessarily
fail to have property (Q), it is possible for operators with no points of continuity to possess the property. Considering any closed linear mapping $A$ from a reflexive space $X$ to its dual,
we have $\widehat{A}=A$ and hence $A$ has property (Q). More generally:

\begin{example}
Suppose that $X$ is reflexive. Let $A:X\rightrightarrows X^*$ be such that
 $\gra A$ is nonempty closed and convex.
Then $\widehat{A}=A$ and hence $A$ has property (Q).
\end{example}
\begin{proof}
It suffices to show that $\gra\widehat{A}\subseteq \gra A$.  Let $(x,x^*)\in\gra \widehat{A}$.
Then we have
\begin{align*}
x^*\in \bigcap_{n\in\NN}\overline{\conv\left[A(x+\frac{1}{n}B_X)\right]}^{\wk}
=\bigcap_{n\in\NN}\overline{\conv\left[A(x+\frac{1}{n}B_X)\right]}=\bigcap_{n\in\NN}\overline{A(x+\frac{1}{n}B_X)}.
\end{align*}
Then there exists a sequence $(a_n, a^*_n)_{n\in\NN}$ in
$\gra A$ such that $a_n\longrightarrow x, a^*_n\longrightarrow
x^*$. The closedness of $\gra A$ implies that
$(x,x^*)\in\gra A$. Then $\gra \widehat{A}\subseteq\gra A$.
\end{proof}

It would be interesting to know whether $\widehat A$ and $A$ can differ for a maximal operator with norm $\times$ weak$^*$ closed graph.

Finally, we illustrate what Corollary \ref{CorSubd} says in the case of $x\mapsto \iota_{B_X}(x)+\frac{1}{p}\|x\|^p.$

\begin{example}\label{SForEx:1}
Let  $p> 1$ and $f:X\rightarrow\RX$ be defined by
\begin{align*}
x\mapsto \iota_{B_X}(x)+\frac{1}{p}\|x\|^p.
\end{align*}
Then for every $x\in \dom f$, we have
\begin{align}
N_{\dom f}(x)&=\begin{cases}\RR_+\cdot Jx,\, &\text{if}\, \|x\|=1;\\
\{0\},\, &\text{if}\, \|x\|<1
\end{cases}\label{SForEx1:e1}\\
(\partial f)_{\inte}(x)&=\begin{cases}\|x\|^{p-2}\cdot Jx,\, &\text{if}\, \|x\|\neq0;\\
\{0\},\, &\text{otherwise}
\end{cases}\label{SForEx1:e2}
\end{align}
where $J:=\partial\tfrac{1}{2}\|\cdot\|^2$ and $\RR_+:=\left[0,+\infty\right[$.
Moreover, $\partial f=N_{\dom f}+(\partial f)_{\inte}=N_{\dom f}+\partial \frac{1}{p}\|\cdot\|^p$,
and then $\partial f(x)\neq (\partial f)_{\inte}(x)=
\overline{\conv\left[(\partial f)_{\inte}(x)\right]}^{\wk}, \forall x\in\bd\dom f$.
We also have $\bigcap_{\varepsilon>0}
\overline{\conv \left[\partial f(x+\varepsilon B_X)\right]}^{\wk}=\partial f(x),\,\forall x\in X$.
\end{example}

\begin{proof}
By Fact~\ref{SubMR}, $\partial f$ is maximally monotone.
We have
\begin{align}
\partial f=\partial \frac{1}{p}\|\cdot\|^p,\quad\forall x\in\inte\dom \partial f.\label{SForEx1:e5}
\end{align}
 By \cite[Lemma~6.2]{BBC1},
\begin{align}
\partial \frac{1}{p}\|\cdot\|^p(x)&=\begin{cases}\|x\|^{p-2}\cdot Jx,\, &\text{if}\, \|x\|\neq0;\\
\{0\},\, &\text{otherwise}.
\end{cases}\label{SForEx1:e0a3}
\end{align}
Now we show that
\begin{align}
(\partial f)_{\inte}(x)=\partial \frac{1}{p}\|\cdot\|^p(x),\quad \forall x\in\dom f.\label{SForEx1:ea3}
\end{align}
Let $x\in\dom f$.
By Corollary~\ref{CorCL:1} and \eqref{SForEx1:e5}, we have
\begin{align}
(\partial f)_{\inte}(x)\subseteq\partial \frac{1}{p}\|\cdot\|^p(x).\label{SForEx1:e6}
\end{align}
Let $x^*\in\partial\frac{1}{p}\|\cdot\|^p(x)$. We first show that $(x,x^*)\in\gra(\partial f)_{\inte}$.
 If $\|x\|<1$,
 then $x\in\inte\dom f$ and hence by \eqref{SForEx1:e5}
and Corollary~\ref{CorCL:2},
$x^*\in\partial f(x)=(\partial f)_{\inte}(x)$.
Now we suppose that $\|x\|=1$. By \eqref{SForEx1:e0a3},
$x^*\in Jx$. Then $\tfrac{n-1}{n}x^*\in J(\tfrac{n-1}{n}x)$
 and hence $(\tfrac{n-1}{n})^{p-1}x^*\in
 \partial \frac{1}{p}\|\cdot\|^p(\tfrac{n-1}{n}x)$ by \eqref{SForEx1:e0a3},
 $\forall n\in\NN$.
 By \eqref{SForEx1:e5},
 \begin{align}
 (\tfrac{n-1}{n})^{p-1}x^*\in\partial f(\tfrac{n-1}{n}x),
 \quad \forall n\in\NN.\label{SForEx1:e7}
 \end{align}
 Since $0\in\inte\dom f$,
$\tfrac{n-1}{n}x\in\inte\dom f=\inte\dom\partial f, \forall n\in\NN$.
Since $\tfrac{n-1}{n}x\longrightarrow x, (\tfrac{n-1}{n})^{p-1}x^*\longrightarrow x^*$,
 by \eqref{SForEx1:e7}, $x^*\in (\partial f)_{\inte}(x)$.
 Hence $\partial\tfrac{1}{p}\|\cdot\|^p(x)\subseteq
(\partial f)_{\inte}(x)$. Thus by \eqref{SForEx1:e6},
we have \eqref{SForEx1:ea3} holds
and then we obtain \eqref{SForEx1:e2} by \eqref{SForEx1:e0a3}.

By \eqref{SForEx1:ea3},
\begin{align}(\partial f)_{\inte}(x)=
\overline{\conv \left[(\partial f)_{\inte}(x)\right]}^{\wk},\quad \forall x\in \dom f.
\label{SForEx1:e3}
\end{align}
On the other hand, since $N_{\dom f}=N_{B_X}$, %\textcolor[rgb]{1.00,0.00,0.00}{(which one),}
 we can immediately get  \eqref{SForEx1:e1}.

Then by Corollary~\ref{CorSubd}, \eqref{SForEx1:e3} and \eqref{SForEx1:ea3}, we have
\begin{align}
\partial f(x)
=N_{\dom f}(x)+(\partial f)_{\inte}(x)=N_{\dom f}(x)+\partial \frac{1}{p}\|\cdot\|^p(x),
\quad\forall x\in X.
\label{SForEx1:e9}
\end{align}
Let $x\in\bd\dom f$. Then $\|x\|=1$.
On combining \eqref{SForEx1:e9}, \eqref{SForEx1:e1} and \eqref{SForEx1:e2},
\begin{align*}
\partial f(x)=\left[1,+\infty\right[\cdot Jx\neq Jx=(\partial f)_{\inte}(x)
=\overline{\conv \left[(\partial f)_{\inte}(x)\right]}^{\wk}.
\end{align*}
Theorem~\ref{PropLe:2} again implies that $\bigcap_{\varepsilon>0}
\overline{\conv \left[\partial f(x+\varepsilon B_X)\right]}^{\wk}=\partial f(x),\,\forall x\in X$.
\end{proof}

\section*{Acknowledgments}
The authors thank Dr.~Brailey Sims for his pertinent comments.
Jonathan  Borwein (and Liangjin Yao) was partially supported by various Australian Research  Council grants.

% \small


\begin{thebibliography}{99}

\bibitem{AE}
J.-P.\ Aubin and I.\ Ekeland, \emph{Applied Nonlinear Analysis}, John Wiley \& Sons Inc.,
New York, 1984.

\bibitem{Aus93}
A.\ Auslender,
``Convergence of stationary sequences for variational inequalities with maximal monotone",
\emph{
Applied Mathematics and Optimization},
vol.~28, pp.~161--172, 1993.

\bibitem{BBC1}
H.H.\ Bauschke, J.M.\ Borwein, and P.L.\ Combettes,
``Essential smoothness, essential strict convexity, and Legendre functions in Banach spaces",
\emph{Communications in Contemporary Mathematics}, vol.~3, pp.~615--647, 2001.

\bibitem{BC2011}
H.H.\ Bauschke and P.L.\ Combettes,
\emph{Convex Analysis and Monotone Operator Theory in Hilbert Spaces},
Springer, 2011.

\bibitem{Bor1}
J.M.\ Borwein,
``Maximal monotonicity via convex analysis'',
\emph{Journal of Convex Analysis}, vol.~13, pp.~561--586, 2006.

\bibitem{Bor2}J.M.\ Borwein, ``Maximality of sums of two maximal monotone operators
 in general
Banach space'',
\emph{Proceedings of the AMS}, vol.~135, pp.~3917--3924, 2007.

\bibitem{Bor4}J.M.\ Borwein, ``Asplund Decompositions of Monotone Operators",
in Proc. Control, Set-Valued Analysis and Applications, ESAIM: Proceedings,
 Alain Pietrus \& Michel H. Geoffroy, Editors, vol.~17 pp.~19--25, 2007.

\bibitem{Bor3}J.M.\ Borwein, ``Fifty years of maximal monotonicity'',
\emph{Optimization Letters}, vol.~4, pp.~473--490, 2010.


\bibitem{BFG}J.M.\ Borwein, S.\ Fitzpatrick, and R.\ Girgensohn,
``Subdifferentials whose graphs are not norm $\times$ weak$^*$ closed'',
\emph{Canadian Mathematical Bulletin}, vol.~4, pp.~538--545, 2003.


\bibitem{BFK}J.M.\ Borwein, S.\ Fitzpatrick, and P.\ Kenderov,
`Minimal convex uscos and monotone'',
\emph{Canadian Journal of Mathematics}, vol.~43, pp.461--476, 1991.


\bibitem{BorStr1}
J.M.\ Borwein and H.M.\ Strojwas,
``Directionally Lipschitzian mappings on Baire spaces",
\emph{Canadian Journal of Mathematics}, vol.~36, pp.~95--130, 1984.

\bibitem{BorStr}
J.M.\ Borwein and H.M.\ Strojwas, ``The hypertangent cone'',
 \emph{Nonlinear Analysis}, vol.~13, pp.~125--139, 1989.


\bibitem{BorVan}
J.M.\ Borwein and J.D.\ Vanderwerff,
\emph{Convex Functions},
Cambridge University Press, 2010.


\bibitem{BorZhu}
J.M.\ Borwein and Q.\ Zhu, ``Multifunctional and functional analytic methods in nonsmooth analysis,''
  in Nonlinear Analysis, Differential Equations and Control, F.H. Clarke and R.J. Stern (eds.) (NATO Advanced Study Institute, Montreal 1999), NATO Science Series C: vol.~528, Kluwer Academic Press, pp.~61--157,1999.

\bibitem{BurIus}
R.S.\ Burachik and A.N.\ Iusem,
\emph{Set-Valued Mappings and Enlargements of Monotone Operators},
Springer-Verlag, 2008.

\bibitem{CheZha}L.\ Cheng and W.\ Zhang,
``A note on non-support points, negligible sets,
G\^{a}teaux differentiability and Lipschitz embeddings,''
\emph{Journal of Mathematical Analysis and Applications}, vol.~350, pp.~531--536, 2009.

\bibitem{Diestel}
J.\ Diestel, \emph{Sequences and Series in Banach spaces}, Springer-Verlag, New York, 1984.


\bibitem{Georg}
P. Gr.~\ Georgiev,
``Porosity and differentiability in smooth Banach spaces",
\emph{Proceedings of the AMS}, vol.~133, pp.~1621--1628, 2005.


\bibitem{Hou}
S-H.\ Hou,
``On property (Q) and other semicontinuity properties of
multifunctions'',
\emph{Pacific Journal of Mathematics}, vol.~103, pp.~39--56, 1982.





\bibitem{JoTh}
A.\ Jofr\'{e} and L.\ Thibault,
``D-representation of subdiferentials of
directionally Lipschitz functions",
\emph{Proceedings of the AMS}, vol.~110, pp.~117--123, 1990.


\bibitem{Megg}
R.E.\  Megginson,
\emph{An Introduction to Banach Space Theory},
Springer-Verlag, 1998.









\bibitem{ph}
R.R.\ Phelps,
\emph{Convex Functions, Monotone Operators and
Differentiability},
2nd Edition, Springer-Verlag, 1993.



\bibitem{PPN}
D.~Preiss, R.R.\ Phelps, and I.\ Namioka,
``Smooth Banach spaces, weak Asplund spaces and monotone or usco mappings,
\emph{Israel Journal of Mathematics}, vol.~72,pp.~257--279, 1990.



\bibitem{Rock69}
R.T.\ Rockafellar,
``Local boundedness of nonlinear, monotone operators",
\emph{Michigan Mathematical Journal}, vol.~16, pp.~397--407, 1969.


\bibitem{Rock70CA}
 R.T.\ Rockafellar, \emph{Convex Analysis},
  Princeton Univ. Press, Princeton, 1970.

\bibitem{Rock702}
R.T.\ Rockafellar,
``On the maximality of sums of subdifferential mappings'',
\emph{Pacific Journal of Mathematics},
vol.~33, pp.~209--216, 1970.



\bibitem{RockWets}
R.T.\ Rockafellar and R.J-B Wets,
\emph{Variational Analysis}, 3rd Printing,
Springer-Verlag, 2009.


\bibitem{Rudin}
R.\ Rudin,
\emph{Functional Analysis},
Second Edition, McGraw-Hill, 1991.





\bibitem{Si}
S.\  Simons,
\emph{Minimax and Monotonicity},
Springer-Verlag, 1998.


\bibitem{Si2}
S.\ Simons, \emph{From Hahn-Banach to Monotonicity},
% Lecture Notes in Mathematics, Vol. 1693,
Springer-Verlag, 2008.


%\bibitem{Sury}
%M.B.\ Suryanarayana, ``Monotonicity and upper semicontinuity",
% \emph{Bulletin of the AMS}, vol.~82, pp.~936--938, 1976.


\bibitem{ThZag}
L.\ Thibault and D.\ Zagrodny,
``Integration of subdiferentials
of lower semicontinuous functions on Banach Spaces",
\emph{Journal of Mathematical Analysis and Applications}, vol.~189, pp.~33--58, 1995.

\bibitem{Ves1}
L.\, Vesel\'{y},
``On the multiplicity points of monotone operators of separable Banach spaces",
\emph{Commentationes Mathematicae Universitatis Carolinae}, vol.~27, pp.~551--570, 1986.

\bibitem{Ves2}
L.\, Vesel\'{y},
 ``On the multiplicity points of monotone operators on separable Banach spaces II",
  \emph{Commentationes Mathematicae Universitatis Carolinae}, vol.~28, pp.~295--299, 1987.

\bibitem{VoiseiIn}
M.D.\ Voisei,
``Characterizations and continuity properties for maximal monotone operators
with non-empty domain interior";
\texttt{http://arxiv.org/abs/1102.5055v1}, February 2011.


\bibitem{Yao3}
L.\ Yao,  ``The sum of a maximal monotone operator  of type (FPV) and a maximal monotone operator
with full domain is maximally monotone'', \emph{Nonlinear Analysis}, vol.~74, pp.~6144--6152, 2011.

\bibitem{Yao2}
L.\ Yao,  ``The sum of a maximally monotone
linear relation and the subdifferential of a proper lower semicontinuous
convex function is maximally monotone'',
\emph{Set-Valued and Variational Analysis}, in press.

\bibitem{Zalinescu}
{C.\ Z\u{a}linescu},
\emph{Convex Analysis in General Vector Spaces}, World Scientific
Publishing, 2002.



\bibitem{Zeidler2A}
E.\ Zeidler,
\emph{Nonlinear Functional Analysis and its Applications II/A:
Linear Monotone Operators},
Springer-Verlag, 1990.

\bibitem{Zeidler2B}
E.\ Zeidler,
\emph{Nonlinear Functional Analysis and its Applications II/B:
Nonlinear Monotone Operators},
Springer-Verlag,
1990.
\end{thebibliography}
\end{document}